\documentclass{jT}
\usepackage{amsmath,amssymb,amsbsy,amsfonts,amsthm,latexsym,
                        amsopn,amstext,amsxtra,euscript,amscd,mathrsfs,color,bm}
                        
\usepackage{float} 
\usepackage[english]{babel}
\usepackage{url}
\usepackage[colorlinks,linkcolor=blue,anchorcolor=blue,citecolor=blue,backref=page]{hyperref}
\usepackage[norefs,nocites]{refcheck}

 \usepackage[np]{numprint}

\npdecimalsign{\ensuremath{.}}

\usepackage{bibentry}

\usepackage[english]{babel}
\usepackage{mathtools}
\usepackage{todonotes}
\usepackage{url}
\usepackage[colorlinks,linkcolor=blue,anchorcolor=blue,citecolor=blue,backref=page]{hyperref}

    \begin{document}

\newcommand{\commA}[2][]{\todo[#1,color=yellow]{A: #2}}
\newcommand{\commI}[2][]{\todo[#1,color=green!60]{I: #2}}
    
\newtheorem{theorem}{Theorem}
\newtheorem{lemma}[theorem]{Lemma}
\newtheorem{example}[theorem]{Example}
\newtheorem{algol}{Algorithm}
\newtheorem{corollary}[theorem]{Corollary}
\newtheorem{prop}[theorem]{Proposition}
\newtheorem{definition}[theorem]{Definition}
\newtheorem{question}[theorem]{Question}
\newtheorem{problem}[theorem]{Problem}
\newtheorem{remark}[theorem]{Remark}
\newtheorem{conjecture}[theorem]{Conjecture}

\newtheorem*{theorem*}{Theorem}

\def\xxx{\vskip5pt\hrule\vskip5pt}

\def\Cmt#1{\underline{{\sl Comments:}} {\it{#1}}}

\newcommand{\Modp}[1]{
\begin{color}{blue}
 #1\end{color}}
 
 \def\bl#1{\begin{color}{blue}#1\end{color}} 
 \def\red#1{\begin{color}{red}#1\end{color}} 

%\newcommand{\eqname}[1]{\tag{#1}}% Tag equation with name

%%%%%%%%%%%%%%%%%%%%%%%%%
% Alphabet calligraphic %
%%%%%%%%%%%%%%%%%%%%%%%%%
\def\cA{{\mathcal A}}
\def\cB{{\mathcal B}}
\def\cC{{\mathcal C}}
\def\cD{{\mathcal D}}
\def\cE{{\mathcal E}}
\def\cF{{\mathcal F}}
\def\cG{{\mathcal G}}
\def\cH{{\mathcal H}}
\def\cI{{\mathcal I}}
\def\cJ{{\mathcal J}}
\def\cK{{\mathcal K}}
\def\cL{{\mathcal L}}
\def\cM{{\mathcal M}}
\def\cN{{\mathcal N}}
\def\cO{{\mathcal O}}
\def\cP{{\mathcal P}}
\def\cQ{{\mathcal Q}}
\def\cR{{\mathcal R}}
\def\cS{{\mathcal S}}
\def\cT{{\mathcal T}}
\def\cU{{\mathcal U}}
\def\cV{{\mathcal V}}
\def\cW{{\mathcal W}}
\def\cX{{\mathcal X}}
\def\cY{{\mathcal Y}}
\def\cZ{{\mathcal Z}}

\def\C{\mathbb{C}}
\def\F{\mathbb{F}}
\def\K{\mathbb{K}}
\def\L{\mathbb{L}}
\def\G{\mathbb{G}}
\def\Z{\mathbb{Z}}
\def\R{\mathbb{R}}
\def\Q{\mathbb{Q}}
\def\N{\mathbb{N}}
\def\M{\textsf{M}}
\def\U{\mathbb{U}}
\def\P{\mathbb{P}}
\def\A{\mathbb{A}}
\def\fp{\mathfrak{p}}
\def\n{\mathfrak{n}}
\def\X{\mathcal{X}}
\def\x{\textrm{\bf x}}
\def\w{\textrm{\bf w}}
\def\a{\textrm{\bf a}}
\def\k{\textrm{\bf k}}
\def\ee{\textrm{\bf e}}
\def\ovQ{\overline{\Q}}
\def \Kab{\K^{\mathrm{ab}}}
\def \Qab{\Q^{\mathrm{ab}}}
\def \Qtr{\Q^{\mathrm{tr}}}
\def \Kc{\K^{\mathrm{c}}}
\def \Qc{\Q^{\mathrm{c}}}
\newcommand \rank{\operatorname{rk}}
\def\ZK{\Z_\K}
\def\ZKS{\Z_{\K,\cS}}
\def\ZKSf{\Z_{\K,\cS_f}}
\def\ZKSfG{\Z_{\K,\cS_{f,\Gamma}}}

\def\bF{\mathbf {F}}

\def\({\left(}
\def\){\right)}
\def\[{\left[}
\def\]{\right]}
\def\<{\langle}
\def\>{\rangle}

\def\gen#1{{\left\langle#1\right\rangle}}
\def\genp#1{{\left\langle#1\right\rangle}_p}
\def\genPs{{\left\langle P_1, \ldots, P_s\right\rangle}}
\def\genPsp{{\left\langle P_1, \ldots, P_s\right\rangle}_p}

\def\e{e}

\def\eq{\e_q}
\def\fh{{\mathfrak h}}

\def\lcm{{\mathrm{lcm}}\,}

\def\({\left(}
\def\){\right)}
\def\fl#1{\left\lfloor#1\right\rfloor}
\def\rf#1{\left\lceil#1\right\rceil}
\def\mand{\qquad\mbox{and}\qquad}

\def\jt{\tilde\jmath}
\def\ellmax{\ell_{\rm max}}
\def\llog{\log\log}

\def\m{{\rm m}}
\def\ch{\hat{h}}
\def\GL{{\rm GL}}
\def\Orb{\mathrm{Orb}}
\def\Per{\mathrm{Per}}
\def\Preper{\mathrm{Preper}}
\def \S{\mathcal{S}}
\def\vec#1{\mathbf{#1}}
\def\ov#1{{\overline{#1}}}
\def\Gal{{\mathrm Gal}}
\def\Sp{{\mathrm S}}
\def\tors{\mathrm{tors}}
\def\PGL{\mathrm{PGL}}
\def\wH{{\rm H}}
\def\Gm{\G_{\rm m}}

\def\house#1{{%
    \setbox0=\hbox{$#1$}
    \vrule height \dimexpr\ht0+1.4pt width .5pt depth \dp0\relax
    \vrule height \dimexpr\ht0+1.4pt width \dimexpr\wd0+2pt depth \dimexpr-\ht0-1pt\relax
    \llap{$#1$\kern1pt}
    \vrule height \dimexpr\ht0+1.4pt width .5pt depth \dp0\relax}}

\newcommand{\bfalpha}{{\boldsymbol{\alpha}}}
\newcommand{\bfomega}{{\boldsymbol{\omega}}}

\newcommand{\Ch}{{\operatorname{Ch}}}
\newcommand{\Elim}{{\operatorname{Elim}}}
\newcommand{\proj}{{\operatorname{proj}}}
\newcommand{\h}{{\operatorname{\mathrm{h}}}}
\newcommand{\ord}{\operatorname{ord}}

\newcommand{\hh}{\mathrm{h}}
\newcommand{\aff}{\mathrm{aff}}
\newcommand{\Spec}{{\operatorname{Spec}}}
\newcommand{\Res}{{\operatorname{Res}}}

\def\fA{{\mathfrak A}}
\def\fB{{\mathfrak B}}

\numberwithin{equation}{section}
\numberwithin{theorem}{section}

\title[On abelian points in subvarieties of a torus]{On abelian points of varieties intersecting subgroups in a torus}

\author[J. Mello] {Jorge Mello}
\address{Max Planck Institute for Mathematics. Vivatsgasse 7, 53111, Bonn, Germany.}
\email{jmelloguitar@gmail.com}

\subjclass[2000]{14G40, 14G25.}

\keywords{Algebraic torus, subvarieties, abelian closure, height.}

\maketitle

\begin{abstr} We show, under some natural conditions, that the set of abelian points on the non-anomalous dense subset of a closed irreducible subvariety $X$ intersected with the union of connected algebraic subgroups of codimension at least $\dim X$ in a torus is finite, generalising results of Ostafe, Sha, Shparlinski and Zannier (2017). We also generalise their structure theorem for such sets when the algebraic subgroups are not necessarily connected, and obtain a related result in the context of curves and arithmetic dynamics.
\end{abstr}

\section{Introduction}
Given  non-zero complex numbers $\alpha_1, \ldots, \alpha_n \in \C^*$, we say that they are \textit{multiplicatively dependent} if there exist integers $k_1,\ldots,k_n \in \Z$, not all zero, such that 
\begin{equation*} 
\alpha_1^{k_1}\cdots \alpha_n^{k_n} = 1.
\end{equation*} If moreover $k_1,...,k_n$ are relatively prime, then $\alpha_1, \ldots, \alpha_n$ are said to be \textit{primitively dependent}.

Multiplicative dependence of algebraic numbers has been studied for a long time.
For examples, see \cite{ BOSS, BMZ, LvdP, vdPL, OSSZ1, OSSZ2}.

Multiplicative dependence arises naturally from the study of points on subvarieties of tori. 
Namely, for  $\G_m$ the multiplicative algebraic group over the complex numbers $\C$, endowed with the multiplicative group law,
there are many works investigating the intersection of an algebraic variety in $\G_m^n$ and 
the algebraic  subgroups in $\G_m^n$; for example \cite{BMZ1, BMZ, Hab, OSSZ1}.

In \cite{OSSZ1}, it was studied the multiplicative dependence of coordinates of curves over fields with the Bogomolov property (see Definition \ref{Bog}), for example the abelian closure of a number field. Namely, its authors proved the following 

\begin{theorem*} \cite[Theorem 2.10]{OSSZ1}  \textit{ Let} $X \subset G= \mathbb{G}_m^n$ \textit{be an irreducible curve defined over a number field} $K$ \textit{ and not contained in any translate of a proper algebraic subgroup of } $\mathbb{G}_m^n$. \textit{Suppose that} $L \supset K$ \textit{has the Bogomolov Property. Then there are at most finitely many points in $X(L)$ whose coordinates are primitively dependent.}
\end{theorem*}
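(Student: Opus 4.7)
The plan is to combine the Bombieri--Masser--Zannier bounded-height theorem for curves in tori with the Bogomolov property of $L$. First, by \cite{BMZ}, since $X$ is an irreducible curve not contained in any translate of a proper algebraic subgroup of $\mathbb{G}_m^n$, there is a constant $C>0$ such that $h(x)\le C$ for every $x \in X(\ov{\Q})$ lying on some proper algebraic subgroup. Any $x=(x_1,\ldots,x_n)\in X(L)$ whose coordinates admit a primitive multiplicative dependence $x_1^{k_1}\cdots x_n^{k_n}=1$ with $\gcd(k_1,\ldots,k_n)=1$ lies on the codimension-one connected subgroup $H_k=\ker(\chi_k)$, where $\chi_k(y)=y_1^{k_1}\cdots y_n^{k_n}$; hence $h(x)\le C$, and in particular $h(x_i)\le C$ for each coordinate.

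Next, the Bogomolov property of $L$ supplies $c_0>0$ such that every $\alpha\in L^*$ is either a root of unity or satisfies $h(\alpha)\ge c_0$. Applied to each coordinate $x_i$ of such an $x$, this forces either $x_i$ to be torsion or $h(x_i)\in[c_0,C]$. This alone does not produce finiteness, since $L$ need not satisfy Northcott's property.

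The third step is a dichotomy. If every $x_i$ is a root of unity, then $x$ is a torsion point of $\mathbb{G}_m^n$, and Laurent's theorem on torsion points of subvarieties of tori, together with the hypothesis that $X$ is not contained in any translate of a proper subgroup, gives that only finitely many such $x$ lie on $X$. Otherwise, some coordinate, say $x_1$, is non-torsion with $h(x_1)\ge c_0$. In this case I would bound the complexity of the admissible primitive exponent vectors $k$ as follows: for an auxiliary non-trivial character $\psi$ whose image in the character lattice is independent from $\chi_k$, the value $\psi(x)\in L^*$ is either a root of unity --- placing $x$ on a codimension-two algebraic subgroup, for which the intersection with $X$ is finite by Maurin's theorem --- or has height bounded below by $c_0$, which paired with the bounded-height conclusion on $X$ restricts $k$ to a finite range. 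For each remaining $k$ the intersection $X\cap H_k$ is finite, giving the desired conclusion.

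The main obstacle is precisely this non-torsion case: extracting genuine finiteness over the non-Northcott field $L$ requires orchestrating the upper bound from Bombieri--Masser--Zannier with the Bogomolov lower bound so as to constrain the combinatorial data (the primitive dependency vector $k$) rather than the coordinates of $x$ themselves. The delicate point is choosing the auxiliary characters $\psi$ appropriately so that $\psi(x)$ either encodes a codimension-two condition falling under Maurin's theorem or triggers the Bogomolov lower bound in a way that bounds $|k|$ uniformly.
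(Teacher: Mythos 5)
Your high-level strategy is sound: combine the Bombieri--Masser--Zannier bounded-height theorem with the Bogomolov lower bound so as to constrain the primitive exponent vector $k$ to a finite set, then observe that each fiber $X\cap H_k$ is finite because $X$ is not contained in any coset. That is indeed the skeleton of the argument in \cite{OSSZ1} and of its generalisation in this paper. But the mechanism you propose for the non-torsion case does not close the gap, and you acknowledge as much at the end.

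The concrete problem is step three. Applying the Bogomolov bound directly to each coordinate $x_i$ only tells you $h(x_i)\in\{0\}\cup[c_0,C]$, and that information does not constrain $k$ at all: infinitely many primitive vectors $k$ are compatible with coordinates of height in a fixed interval. Your attempt to repair this via an auxiliary character $\psi$ runs backwards: bounded height gives an upper bound $h(\psi(x))\ll \|\psi\|\,C$ depending on the \emph{size} of $\psi$, while the Bogomolov property gives the lower bound $h(\psi(x))\geq c_0$ when $\psi(x)$ is non-torsion. Combining these bounds $\|\psi\|$ from below, not $\|k\|$ from above, so it does not restrict $k$ to a finite range. The missing idea is to look at the multiplicative subgroup $\Gamma_P\subset L^\times$ generated by the coordinates, which has rank $r\leq n-1$, and to write $\xi_i=\zeta_i\prod_{j=1}^r g_j^{m_{ij}}$ for generators $g_j$ of the free part. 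One then invokes Schlickewei's lemma (see \cite[Lemma 2]{BMZ}) to choose the $g_j$ so that $h\bigl(\prod_j g_j^{b_j}\bigr)\geq c_r\sum_j |b_j|\,h(g_j)$. Now the bounded-height theorem bounds the left side for $b_j=m_{ij}$, and the Bogomolov property bounds each $h(g_j)\geq c_0$ \emph{for the generators}, not the coordinates; together these force the exponents $m_{ij}$ into a finite set. The vector $k$ is then chosen (primitively, by a lattice-basis argument as in Lemma \ref{lem:HB}) in the orthogonal complement of the lattice spanned by the $(m_{1j},\dots,m_{nj})$, so $k$ also ranges over a finite set, and each $X\cap H_k$ is finite. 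Without Schlickewei's quasi-orthogonal basis, the height bounds simply do not interact with the combinatorics of $k$, which is exactly the obstacle you flagged.
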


For multiplicative dependence of values of rational functions in this context see \cite{OSSZ2}.
In \cite{OSSZ2} it was also studied the multiplicative dependence of elements in an orbit of an algebraic dynamical system, 
and recently in \cite{BOSS} this was extended to the more general setting of multiplicative dependence modulo a finitely generated multiplicative group.

In this paper, we want to study the extension of \cite[Theorem 2.10]{OSSZ1} stated above to higher dimensional varieties, as suggested in \cite[Remark 4.1]{OSSZ1}. The statements studied and obtained require the definition of the open \textit{anomalous set} $X^{oa}$ of an irreducible closed subvariety $X \subset \mathbb{G}_m^n$, to be defined properly further down (Definition \ref{defanom}). If $C$ is a curve then $C^{oa}=C$ if and only if $C$ is not contained in a translate of a proper algebraic subgroup of $\mathbb{G}_m^n$. Among other results, we prove the following

 \begin{theorem} Let $X \subset G= \mathbb{G}_m^n$be an irreducible closed subvariety defined over a number field $K$. Suppose that $L \supset K$ has the Bogomolov Property. Then there are at most finitely many points in $X^{oa}(L)$ contained in some connected algebraic subgroup of $G$ of codimension at least $\dim X$\end{theorem}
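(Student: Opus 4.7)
The plan has two main ingredients: Habegger's Bounded Height Theorem on the non-anomalous locus of $X$, which supplies a uniform height bound on the points in the set under consideration, and the Bogomolov property of $L$, which is then used to promote this bounded-height statement to finiteness.

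First I will apply Habegger's Bounded Height Theorem \cite{Hab} to $X$: there is a constant $B = B(X)$ such that any point of $X^{oa}(\overline{\mathbb{Q}})$ contained in some algebraic subgroup of $\mathbb{G}_m^n$ of codimension at least $\dim X$ has Weil height at most $B$. In particular, each coordinate $p_i$ of each $P = (p_1, \dots, p_n) \in X^{oa}(L)$ in our set satisfies $h(p_i) \leq B$. The Bogomolov property of $L$ provides a constant $c > 0$ such that every non-torsion $\alpha \in L^*$ has $h(\alpha) \geq c$; hence each $p_i$ is either a root of unity or has $h(p_i) \in [c, B]$.

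To deduce finiteness I would then argue by contradiction: if the set in question were infinite, its Zariski closure $Y \subset X$ would be positive-dimensional and meet $X^{oa}$ in a dense open subset, so $Y$ could not itself be a torsion coset (every positive-dimensional torsion coset contained in $X$ is anomalous). I plan to proceed by induction on $\dim X$, with base case $\dim X = 1$ supplied essentially by \cite[Theorem 2.10]{OSSZ1} (noting that $C^{oa} = C$ for an irreducible curve $C$ not contained in a translate of a proper subgroup). For the inductive step I would stratify the offending points by the subset $I \subset \{1, \dots, n\}$ of coordinates at which $p_i$ is torsion, and reduce each nonempty stratum to the inductive hypothesis applied to the lower-dimensional slices $X \cap \bigcap_{i \in I} \{x_i = \zeta_i\}$, controlling the possible torsion values $\zeta_i$ via Laurent's Manin--Mumford theorem for tori.

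I expect the main obstacle to be this last, combinatorial step, in particular the stratum $I = \emptyset$, where every coordinate is non-torsion with $h(p_i) \in [c, B]$ but the degree $[L : K]$ is unbounded, so that Northcott's theorem does not apply directly. Handling this stratum will presumably require either a relative Bogomolov-type statement for positive-dimensional subvarieties with infinitely many bounded-height $L$-points, or a projection of $X$ together with the connected algebraic subgroup $H \ni P$ of codimension at least $\dim X$ onto a quotient torus that effectively reduces the ambient dimension, followed by a second application of Habegger's theorem. One also has to verify (or replace by a suitable variant) the invariance of the non-anomalous condition both under these slicings and under these projections, which is the delicate technical point of the argument.
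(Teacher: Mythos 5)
Your opening moves — Habegger's Bounded Height Theorem to get a uniform bound $B$ on $h(P)$ for $P\in X^{oa}\cap\mathcal G^{[\dim X]}$, then the Bogomolov property to conclude each coordinate is either a root of unity or has height in $[c,B]$ — are correct and coincide with the paper's starting point. The difficulty is that you then commit to an induction/stratification scheme that does not close, and you say so yourself: the stratum $I=\emptyset$, where every coordinate is non-torsion of bounded height but $[L:K]$ is unbounded, is not handled. This is not a side case; for a generic point of $X^{oa}$ it is the main case. Northcott is unavailable, and you offer only a wish-list (a relative Bogomolov statement, or a projection onto a quotient torus) without carrying either out. The stratification by torsion coordinates is also shakier than you suggest even for $I\neq\emptyset$: the torsion values $\zeta_i$ are not a priori confined to a finite set, Laurent/Manin--Mumford controls torsion cosets rather than individual coordinate values, and the sliced varieties $X\cap\{x_i=\zeta_i\}$ need not inherit a clean relationship between their anomalous loci and that of $X$.

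The paper's proof avoids this entirely, and the gap in your argument is precisely the absence of its central mechanism: a way to force the \emph{connected} subgroup $H\supset\{P\}$ to range over a finite set. Rather than asking which coordinates are torsion, the paper looks at the multiplicative group $\Gamma_P\subset L^*$ generated by \emph{all} the coordinates $\xi_i=x_i(P)$, which has rank $r\le n-\dim X$, and writes $\xi_i=\zeta_i\prod_j g_j^{m_{ij}}$ with $g_j$ generators of the torsion-free part chosen (via a result of Schlickewei, \cite[Lemma 2]{BMZ}) so that $h(g_1^{b_1}\cdots g_r^{b_r})\gg \sum_j|b_j|h(g_j)$. Combined with the Habegger upper bound and the Bogomolov lower bound $h(g_j)\gg 1$, this forces the exponent matrix $(m_{ij})$ to be bounded independently of $P$ — a statement strictly stronger than "$h(\xi_i)\le B$", and the one that actually carries arithmetic weight. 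Then, using that $H$ is connected, its defining lattice $\mathcal L=\sum_j\Z(a_{1j},\dots,a_{nj})$ is primitive (Lemma \ref{lemprim}), lives in the primitive orthogonal lattice $\mathcal M^\perp$ of the $(m_{ij})$, and by a determinant comparison and Heath-Brown's small-basis lemma (Lemma \ref{lem:HB}) the $a_{ij}$ can be taken bounded. Hence only finitely many $H$ occur, and each $X\cap H$ is finite because any positive-dimensional component would be $(\dim X)$-anomalous. That chain of lattice arguments — Schlickewei basis, bounded $m_{ij}$, primitivity of $\mathcal L$, Heath-Brown — is the content your proposal is missing, and without it the $I=\emptyset$ stratum (and really the whole proof) remains open.
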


In Section 2 we state preliminar notation, definitions, and results. In Section 3 we start proving two main results, including a structure theorem for abelian points for the non-anomalous dense subset of a variety that generalises \cite[Theorem 2.1]{OSSZ1}.  Finally, we consider similar problems in the context of arithmetic dynamics, in particular when curves intersect periodic hypersurfaces.
\section{Preliminaries}
\subsection{Notation} 
We denote by $\mathbb{U}$ the set of roots of unity in $\overline{\mathbb{Q}}$.
For a field $K$, we use $\ov K$ to denote the algebraic closure of $K$, $K_c$ to denote the cyclotomic closure of $K$, and $K_{ab}$ to denote the abelian closure of $K$. We use $h$ to denote the usual logarithmic Weil height on $\overline{\mathbb{Q}}$.

As usual, for given real valued functions  $U$ and $V$, the notations $U\ll V$, $V\gg  U$ and
$U=O(V)$ are all equivalent to the statement that the inequality
$|U|\le c V$ holds with some constant $c>0$.
\subsection{Preliminar definitions and results} In order to prove our desired extension results on finiteness of abelian points, we start recalling the general concept of anomalous varieties. Namely, let $Y \subset X \subset \mathbb{G}_m^n$ be algebraic subvarieties of a multiplicative torus, $Y$ irreducible, and let $s$ be a non-negative integer.
Then $Y$ is said $s$-\textit{anomalous} (for $X$) if $\dim Y \geq 1$ and there exists a coset (translate of a subgroup) $ H \subset \mathbb{G}_m^n$ satisfying
$$
Y \subset H \text{ and } \dim Y > s + \dim H - n.
$$
\begin{definition}\label{defanom}
The non-anomalous $s$-part of $X$ is
$$
X^{oa,s}= X \setminus (\text{ union of all }s\textit{-anomalous subvarieties }),
$$ and we denote $X^{oa}=X^{oa,\dim X}$. 
\end{definition} In \cite{BMZ1}, it was proved the density of such set.
\begin{lemma}\cite{BMZ1}
$X^{oa}$ is a Zariski open subset of $X$.
\end{lemma}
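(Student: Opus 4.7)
The plan is to show that the complement $X\setminus X^{oa}$ is a \emph{finite} union of closed subsets of $X$, by appealing to the Structure Theorem of Bombieri--Masser--Zannier for anomalous subvarieties in tori. With $s=\dim X$, by definition
\[
X\setminus X^{oa} \;=\; \bigcup_{H} \; \bigcup_{Z} Z,
\]
where $H$ ranges over cosets of proper algebraic subgroups of $\mathbb{G}_m^n$ and $Z$ over the irreducible components of $X\cap H$ of dimension at least $1$ that exceed the expected dimension $\dim X + \dim H - n$. A priori this is an uncountable union, and the task is to collapse it to a finite union of closed sets.

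First, I would fix an algebraic subgroup $H_0\subset \mathbb{G}_m^n$ of dimension $d$ and consider the quotient map $\pi:\mathbb{G}_m^n\to \mathbb{G}_m^n/H_0$ together with its restriction $\pi|_X$. The fibre of $\pi|_X$ through a point $x$ is exactly $X\cap xH_0$. By upper semicontinuity of fibre dimension, the locus
\[
V_{H_0} \;=\; \bigl\{x\in X \,:\, \dim_x (\pi|_X)^{-1}(\pi(x)) > \max(0,\dim X+d-n)\bigr\}
\]
is Zariski closed in $X$, and it consists exactly of the points of $X$ lying on some positive-dimensional irreducible component of $X\cap gH_0$ of anomalous dimension for some $g$. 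Thus $X\setminus X^{oa} = \bigcup_{H_0} V_{H_0}$ as $H_0$ ranges over the countably many algebraic subgroups of $\mathbb{G}_m^n$.

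The decisive step is to cut this countable union down to a finite one, and this is the content of the Structure Theorem of BMZ \cite{BMZ1}: there exists a finite list of algebraic subgroups $H_1,\dots,H_k$ of $\mathbb{G}_m^n$ such that every maximal anomalous subvariety of $X$ occurs as a component of $X\cap gH_j$ for some index $j$ and some $g\in\mathbb{G}_m^n$. Granting this, one has $X\setminus X^{oa}=V_{H_1}\cup\cdots\cup V_{H_k}$, a finite union of closed subsets of $X$, and hence $X^{oa}$ is Zariski open. The main obstacle is precisely this finiteness of relevant subgroups, whose proof requires the substantial machinery of \cite{BMZ1} (a Noetherian/Chow-type argument on families of translates, combined with an Ax-type transcendence input controlling when a coset of $H_0$ can cut $X$ in larger than expected dimension); I would invoke it as a black box rather than reproducing it here.
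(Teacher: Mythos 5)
The paper itself gives no proof of this lemma and simply cites \cite{BMZ1}; your sketch correctly reconstructs how that source obtains openness, combining upper semicontinuity of fibre dimension for the quotient map $\mathbb{G}_m^n \to \mathbb{G}_m^n/H_0$ restricted to $X$ (which makes each $V_{H_0}$ Zariski closed) with the Bombieri--Masser--Zannier Structure Theorem (which collapses the a priori countable union over subgroups $H_0$ to a finite one). Since this is the same route taken in \cite{BMZ1} and you explicitly black-box the Structure Theorem, which carries the entire technical weight, your proposal is a faithful outline of the standard argument rather than a genuinely different proof.
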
 We aim to intersect varieties with algebraic subgroups of the torus, thus we use the following notation
\begin{definition}\label{defcosetunion} Let $d$ be a non-negative integer. We denote the union of all algebraic subgroups of $\mathbb{G}_m^n$ of codimension at least $d$ ( or dimension at most $n-d$) by
$$\mathcal{G}^{[d]}= \bigcup_{\text{codim } H \geq d} H(\bar{\mathbb{Q}}).$$
\end{definition} The following result was in part initially conjectured by Bombieri, Masser and Zannier \cite{BMZ1} and proved by Habegger (2009)\cite{Hab},  together with the subsequent Lemma. They play a crucial role in the verification of the present conclusions.
\begin{lemma}\cite[Bounded Height Theorem]{Hab}\label{boundedheight}
Let $G= \mathbb{G}_m^n$ and let $X \subset G$ be an irreducible closed subvariety defined over $\bar{\mathbb{Q}}$. Then the height is bounded from above on $X^{oa,s}\cap \mathcal{G}^{[s]}$.\end{lemma}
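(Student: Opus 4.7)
The final statement is Habegger's Bounded Height Theorem, so any real proof is necessarily deep; below I sketch the strategy originating in Bombieri--Masser--Zannier (for curves) and completed by Habegger. The plan is to parameterise subgroups by lattices, use the geometry of numbers to normalise them, and then convert the non-anomalous hypothesis into a Vojta-type height inequality. Every connected algebraic subgroup $H\subset\G_m^n$ of codimension $r\geq s$ is the identity component of the kernel of a homomorphism $\mathbf{x}\mapsto(\mathbf{x}^{\lambda_1},\dots,\mathbf{x}^{\lambda_r})$, where $\lambda_1,\dots,\lambda_r$ form a basis of a saturated rank-$r$ sublattice $\Lambda\subset\Z^n$. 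By Minkowski's second theorem, together with the Bombieri--Vaaler form of Siegel's lemma, one may select such a basis realising the successive minima of $\Lambda$. Truncating to the first $s$ vectors produces a surjection $\phi:\G_m^n\twoheadrightarrow\G_m^s$ whose ``shape'' (the unit-normalised configuration of the $\lambda_i$) lives in a compact region, independent of $H$. Any $P\in X\cap H$ satisfies $\phi(P)=1$, hence $h(\phi(P))=0$.

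Next, the non-anomalous hypothesis $P\in X^{oa,s}$ is brought to bear. It forces the restriction $\phi|_X$ to be generically finite near $P$: otherwise a positive-dimensional fibre through $P$ would lie inside the coset $P\cdot\ker\phi$, whose codimension is $n-s$, producing an $s$-anomalous subvariety of $X$ through $P$ and contradicting the definition of $X^{oa,s}$. A quantitative refinement of this, uniform in the shape of $\phi$, is what allows the geometric input to be fed into the subsequent height estimate.

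Once generic finiteness is in place, a Vojta-style projection argument yields a functorial height inequality of the form $h(P)\ll h(\phi(P))+C$, where $C$ depends only on $X$, on $n$, and on the (bounded) shape of $\phi$. Since $h(\phi(P))=0$ this already bounds $h(P)$ uniformly in $P$. The main obstacle---and the deepest step in Habegger's work---is extracting such an inequality with a constant $C$ that is uniform as $\Lambda$ ranges over all rank-$\geq s$ sublattices of $\Z^n$; this is achieved by an induction on codimension combined with a delicate application of Zhang's theorem on the essential minimum of subvarieties of tori, which is where the non-anomalous condition is ultimately indispensable, since for anomalous points the analogous inequality simply fails.
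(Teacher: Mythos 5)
The paper does not prove this Lemma; it is stated as a citation to Habegger's Bounded Height Theorem \cite{Hab} and used as a black box, so there is no internal proof against which to compare. Your sketch is an honest attempt at describing the external argument, and the broad outline is in the right spirit: parameterising algebraic subgroups by sublattices of $\Z^n$, using the geometry of numbers to reduce to a bounded family of ``shapes'' of surjections $\phi:\G_m^n\twoheadrightarrow\G_m^s$, observing that the non-anomalous hypothesis forces $\phi|_X$ to have a finite generic fibre, and closing the loop via a lower bound for heights of algebraic points coming from Zhang's theorem on essential minima. Those are indeed the load-bearing ingredients.

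Where the sketch drifts is in the characterisation of the height inequality as ``Vojta-style'' and in invoking the Bombieri--Vaaler Siegel lemma: these echo the Bombieri--Masser--Zannier argument for \emph{curves}, which has a somewhat different flavour from Habegger's general-dimension proof. Habegger's argument is driven instead by quantitative intersection theory: one establishes degree bounds for the pushforward $\phi_*X$ as $\phi$ ranges over the relevant normalised surjections, and the non-anomalous condition enters precisely to guarantee that these degrees do not collapse. The height inequality $h(P)\ll h(\phi(P))+C$ then emerges from comparing the essential minimum lower bound with those degree bounds, not from a projection/Vojta mechanism. If you intend the sketch to track Habegger's actual proof, you should foreground the degree estimates for $\phi_*X$ (and the induction on codimension they feed) rather than the Siegel-lemma machinery of the curve case. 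As it stands, the sketch is a plausible high-level narrative, but it conflates two proofs and would not serve as a roadmap for reconstructing the uniform constant $C$.
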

\begin{lemma}\cite{Hab}\label{finiteness} Under the conditions of Lemma \ref{boundedheight}, 
$$X^{oa}\cap \mathcal{G}^{[\dim X +1]}$$ is finite.
\end{lemma}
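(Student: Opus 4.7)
The plan is to bootstrap the Bounded Height Theorem (Lemma~\ref{boundedheight}) up to a finiteness statement by an anomalous-locus contradiction argument. The setup is a simple observation about monotonicity; the core work is a specialization argument for which the height bound is essential; and the closing step is a dimension count.

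The monotonicity is that $X^{oa,s} \subseteq X^{oa,s'}$ whenever $s \le s'$, since the defining inequality $\dim Y > s + \dim H - n$ is weaker for smaller $s$ and hence fewer subvarieties are $s'$-anomalous than $s$-anomalous. In particular $X^{oa} = X^{oa,\dim X} \subseteq X^{oa,\dim X + 1}$. Applying Lemma~\ref{boundedheight} with $s = \dim X + 1$ then yields a uniform upper bound $B$ for $h$ on
\[
S := X^{oa} \cap \mathcal G^{[\dim X + 1]}.
\]

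Now I would argue by contradiction: suppose $S$ is infinite. As $X$ is Noetherian, the Zariski closure of $S$ in $X$ contains an irreducible component $Y \subseteq X^{oa}$ with $\dim Y \ge 1$, in which $Y \cap \mathcal G^{[\dim X + 1]}$ is Zariski dense. The crux is to promote this density to the statement that $Y$ lies inside a \emph{single} algebraic subgroup $H$ with $\operatorname{codim} H \ge \dim X + 1$. A priori, points of $S \cap Y$ may be scattered across infinitely many distinct subgroups; the height bound $B$, together with a noetherian/specialization analysis of the countable family of sublattices of $\Z^n$ of rank $\ge \dim X + 1$ (which parametrises the connected subgroups of codimension $\ge \dim X + 1$), cuts this family down to one whose generic member contains $Y$, after which an irreducibility argument produces a single $H \supseteq Y$. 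Granting this, and noting $\dim H \le n - \dim X - 1$, the inequality
\[
\dim X + \dim H - n \;\le\; -1 \;<\; \dim Y
\]
shows that $Y$ is $\dim X$-anomalous in $X$, contradicting $Y \subseteq X^{oa}$.

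The main obstacle is the specialization step in the middle: passing from ``a Zariski dense set of subgroup points'' to ``containment in a single subgroup''. Everything else is essentially definitional bookkeeping, but this passage is exactly the hard content of Habegger's argument, and it is where the height bound coming from Lemma~\ref{boundedheight} plays its substantive role.
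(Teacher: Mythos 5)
The paper does not supply a proof of Lemma~\ref{finiteness}: it is quoted directly from Habegger's paper (and, going back further, to the reduction in Bombieri--Masser--Zannier showing that bounded height implies finiteness here). So there is no in-house argument to compare against, and your sketch must stand on its own. The preliminary steps are sound: the monotonicity $X^{oa}=X^{oa,\dim X}\subseteq X^{oa,\dim X+1}$ is correct, and applying Lemma~\ref{boundedheight} with $s=\dim X+1$ does give a uniform height bound on $S=X^{oa}\cap\mathcal{G}^{[\dim X+1]}$.

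The gap is precisely the step you flag, and the tool you propose to fill it is not the right one. Zariski density of $S$ in a positive-dimensional component $Y$ of $\overline{S}$ does not force $Y$ into a single algebraic subgroup of codimension $\geq\dim X+1$, and a ``Noetherian/specialization analysis of sublattices'' cannot produce one: the family of subgroups meeting $Y$ may genuinely be infinite, and the height bound $B$ does not by itself control the determinants of the relation lattices $\Lambda_P$, because $[\Q(P):\Q]$ is unbounded, so Northcott and naive Dobrowolski arguments do not engage. What Habegger and Bombieri--Masser--Zannier actually add on top of bounded height is a \emph{lower} bound on heights (an essential-minimum/Bogomolov-type theorem for subvarieties of tori, in the spirit of Zhang and Amoroso--David, or a generalised Lehmer inequality in subgroups), which is used to bound the exponent vectors $a_{ij}$ of the relevant subgroups and thereby reduce to a finite family of $H$'s; a dimension count as in Definition~\ref{defanom} then closes the argument. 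Without that extra Diophantine input, the contradiction you aim for via the Zariski closure of $S$ does not go through.
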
 
 In order to work with connected algebraic subgroups, the following special concept for lattices and its properties will be useful.
\begin{definition}
We say that a free lattice $\mathcal{L}= \sum_{j=1}^r \mathbb{Z} (a_{1j},...,a_{nj})$ in $ \mathbb{Z}^n$ is \textit{primitive} if the gcd of the $r\times r$-minors of the matrix $(a_{ij})_{i,j}$ is 1, or equivalently if $\mathbb{Q}\mathcal{L} \cap \mathbb{Z}^n=\mathcal{L}$, according to \cite[Proposition 4.2]{Z}.
\end{definition}
In fact, it turns out that primitive lattices characterize connected algebraic subgroups.
\begin{lemma}\cite[Corollary 4.5]{Z}\label{lemprim}
Let $H$ be an algebraic subgroup of $\mathbb{G}_m^n$. Then there exists a free lattice  $\mathcal{L}= \sum_{j=1}^r \mathbb{Z} (a_{1j},...,a_{nj})$ such that $H$ is defined by the equations
$$x_1^{a_{1j}}...x_n^{a_{nj}}=1, \qquad j=1,...,r,$$ and $H$ is connected if and only if $\mathcal{L}$ is primitive.
\end{lemma}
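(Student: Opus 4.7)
Let $d = \dim X$. The plan is to reduce to the case of connected subgroups of codimension exactly $d$ using Habegger's height and finiteness inputs, then use the primitive-lattice description (Lemma~\ref{lemprim}) to transform the problem into one about bounded-height points on a lower-dimensional subvariety of a torus, where the Bogomolov property of $L$ yields finiteness.

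By Lemma~\ref{boundedheight} with $s = d$, the set $\cS$ of $P\in X^{oa}(L)$ lying on some connected algebraic subgroup of codimension $\geq d$ is contained in $X^{oa}(\ov{\Q})\cap \cG^{[d]}$, so $h(P)\leq M$ for all $P\in \cS$ and some $M = M(X) > 0$. By Lemma~\ref{finiteness}, $X^{oa}(\ov{\Q})\cap \cG^{[d+1]}$ is already finite, so I restrict attention to the subset $\cS_0\subset \cS$ of those $P$ which lie on a connected algebraic subgroup $H_P$ of codimension exactly $d$ and on no algebraic subgroup of codimension $\geq d+1$.

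For each such $P \in \cS_0$, Lemma~\ref{lemprim} identifies $H_P$ with a primitive rank-$d$ lattice $\cL_P \subset \Z^n$. Primitivity lets me extend a $\Z$-basis of $\cL_P$ to one of $\Z^n$, giving a monomial automorphism $\phi_P : \Gm^n \to \Gm^n$ over $\Q$ with $\phi_P(H_P) = \{1\}^d \times \Gm^{n-d}$. Since $\phi_P$ is a group automorphism of $\Gm^n$, it preserves the non-anomalous locus, so $Q_P := \phi_P(P)$ lies in $\phi_P(X)^{oa}(L)$ with $h(Q_P) = O(M)$ and has its first $d$ coordinates equal to $1$. Because $P \in X^{oa}$ and $H_P$ has codimension exactly $\dim X$, the intersection of $\phi_P(X)$ with $\{1\}^d \times \Gm^{n-d}$ restricted to the non-anomalous locus has dimension $\leq \dim X - d = 0$, giving finitely many $Q_P$ for each fixed $\cL_P$.

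The main obstacle is the uniformity in $\cL_P$: different points of $\cS_0$ generally produce different primitive lattices and different monomial maps, so the per-lattice finiteness above does not by itself imply global finiteness. The Bogomolov property of $L$ is what supplies the missing uniformity. Each of the last $n-d$ coordinates $\beta_i$ of $Q_P$ lies in $L$ with bounded height, so by the Bogomolov hypothesis each $\beta_i$ is either a root of unity in $L$ or satisfies $h(\beta_i)\geq c$ for a fixed constant $c>0$. Combined with $h(Q_P) \leq O(M)$, this forces the tuples $(\beta_1,\ldots,\beta_{n-d})$ into a finitely generated subgroup of $L^*\times\cdots\times L^*$ modulo roots of unity; feeding this finitely generated subgroup into Lemma~\ref{maurin} collapses the a priori infinite family of lattices $\cL_P$ to a finite list, at which point the per-lattice finiteness established above completes the proof. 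Making this last combination precise --- the interplay between bounded height, the Bogomolov lower bound, and the varying primitive lattices $\cL_P$ --- is the technical heart of the argument.
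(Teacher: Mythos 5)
Your proposal does not address the statement it is meant to prove. Lemma~\ref{lemprim} is a purely structural, algebraic fact about subgroups of $\mathbb{G}_m^n$: every algebraic subgroup is cut out by a finite set of character relations corresponding to a lattice $\cL\subseteq\Z^n$, and the subgroup is connected if and only if $\cL$ is primitive (equivalently, $\Q\cL\cap\Z^n=\cL$). This involves no heights, no Bogomolov property, and no finiteness of rational points, and the paper does not prove it at all but quotes it from Zannier's lecture notes as Corollary~4.5 of~\cite{Z}. What you have written instead is an outline of a proof of Theorem~\ref{mainthm}, the paper's main finiteness theorem, which \emph{uses} Lemma~\ref{lemprim} as one of its inputs. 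So the argument is aimed at the wrong target.

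Even read as a proposed proof of Theorem~\ref{mainthm}, the sketch has a real gap exactly at the step you yourself flag as ``the technical heart.'' Applying a monomial automorphism $\phi_P$ to move $H_P$ onto a coordinate subtorus changes heights by a factor governed by the entries of the integer matrix defining $\phi_P$; without a prior uniform bound on $\cL_P$ the estimate $h(Q_P)=O(M)$ is not justified, so the normalization already presupposes the uniformity you are trying to establish. More seriously, bounded height together with the Bogomolov lower bound does \emph{not} place the residual coordinates $\beta_i$ in a finitely generated subgroup of $L^*$ modulo roots of unity: there are infinitely many elements of $L$ of bounded height with $h\geq c$ (Northcott-type finiteness would require bounded degree, which is unavailable here), so there is no finitely generated $\Gamma$ to feed into Lemma~\ref{maurin}, and the ``collapse to a finite list of lattices'' does not follow. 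The paper's proof of Theorem~\ref{mainthm} takes a different route: it writes the coordinates of $P$ in terms of Schlickewei-normalized generators $g_j$ of the multiplicative group $\Gamma_P$ generated by those coordinates (Lemma~2 of~\cite{BMZ}), combines the resulting lower bound on $h\bigl(\prod_j g_j^{m_{ij}}\bigr)$ with Habegger's upper bound to bound the exponents $m_{ij}$ uniformly, and only then controls the defining lattice of $H_P$ through its inclusion in the orthogonal lattice $\cM^{\bot}$, the primitivity criterion of Lemma~\ref{lemprim}, and Heath-Brown's small-basis lemma.
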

In higher dimensions, we will also make use of a way to find small bases for integer lattices as stated below.
\begin{lemma}
\label{lem:HB}\cite[Lemma 1]{HB}
Let $\cL\subseteq \Z^d$ be a lattice of rank $m$.  Then $\cL$ has a basis $\vec{b}_1,\ldots,\vec{b}_m$ such that, for each $\vec{x}\in \cL$, we may write 
$$\vec{x}=\sum_{j=1}^{m}\lambda_j\vec{b}_j,$$
with 
$$\lambda_j\ll \frac{\|\vec{x}\|}{\|\vec{b}_j\|}.$$
We also have 
$$\det \cL\ll \prod_{j=1}^{m}\|\vec{b}_i\|\ll \det \cL.$$ 
\end{lemma}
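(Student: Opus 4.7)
The plan is to produce $\vec{b}_1,\ldots,\vec{b}_m$ as a Minkowski-reduced (or equivalently, an LLL-reduced) basis of $\cL$, and to derive both stated properties from the standard geometry of numbers of such bases.

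Let $\lambda_1\le\cdots\le\lambda_m$ denote the successive minima of $\cL$ for the Euclidean norm inherited from $\R^d$. Minkowski's second theorem gives
$$\prod_{j=1}^m \lambda_j \;\asymp\; \det \cL,$$
with implicit constants depending only on $m$. It is classical (Mahler; equivalently, the output of the LLL algorithm) that one can exhibit an actual $\Z$-basis $\vec{b}_1,\ldots,\vec{b}_m$ of $\cL$ with $\|\vec{b}_j\|\asymp \lambda_j$ — note that in general one cannot choose vectors realizing the $\lambda_j$ to form a basis (this already fails in rank $5$), which is why losses depending on $m$ are unavoidable. Combining the two estimates yields
$$\det \cL \;\asymp\; \prod_{j=1}^m \|\vec{b}_j\|,$$
which is the last assertion of the lemma.

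For the coefficient bound I would exploit the near-orthogonality of a reduced basis. Let $V_j = \mathrm{span}_{\R}(\vec{b}_i : i\neq j)$ and let $\pi_j$ denote the orthogonal projection onto $V_j^{\perp}$. The crucial fact is
$$\|\pi_j(\vec{b}_j)\| \;\gg\; \|\vec{b}_j\|,$$
with constants depending only on $m$. Indeed, if this failed, then subtracting a suitable integer combination $\sum_{i\neq j}n_i\vec{b}_i$ from $\vec{b}_j$ would produce a lattice vector of norm substantially smaller than $\lambda_j$ that is still linearly independent from $\vec{b}_1,\ldots,\vec{b}_{j-1}$, violating Minkowski-reducedness (equivalently, contradicting the lower bound on Gram--Schmidt norms provided by the Lov\'asz condition). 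Granting this, applying $\pi_j$ to the expansion $\vec{x}=\sum_{k}\lambda_k\vec{b}_k$ annihilates every term except $k=j$, whence
$$|\lambda_j| \;=\; \frac{\|\pi_j(\vec{x})\|}{\|\pi_j(\vec{b}_j)\|} \;\le\; \frac{\|\vec{x}\|}{\|\pi_j(\vec{b}_j)\|} \;\ll\; \frac{\|\vec{x}\|}{\|\vec{b}_j\|},$$
as required.

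The main obstacle is the construction step: producing a genuine $\Z$-basis of $\cL$ whose norms are comparable to the successive minima. This is the well-known but non-trivial point that separates the existence of short linearly independent vectors from the existence of a short basis; it forces one to invoke Mahler's theorem or to run LLL, rather than to simply pick vectors realizing the $\lambda_j$. Once such a basis is in hand, Minkowski's second theorem gives the determinant estimate, and the near-orthogonality of a reduced basis makes the coefficient bound immediate via the projections $\pi_j$.
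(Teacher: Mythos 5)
The paper does not prove this lemma; it is imported verbatim as \cite[Lemma 1]{HB}, so there is no in-text argument to compare against. Your reconstruction is, however, structurally the standard geometry-of-numbers proof that Heath-Brown's lemma rests on: pick a reduced basis with $\|\vec{b}_j\|\asymp\lambda_j$ (Mahler/LLL), use Minkowski's second theorem for the determinant estimate, and use near-orthogonality for the coefficient bound. The facts you invoke are all true and the overall plan is sound.

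One step, though, is mis-justified as written. Your bound $\|\pi_j(\vec{b}_j)\|\gg\|\vec{b}_j\|$ is correct, but neither of the two reasons you offer for it actually delivers it. The Lov\'asz condition controls the Gram--Schmidt norms $\|\vec{b}_j^*\|$, i.e.\ the projection of $\vec{b}_j$ orthogonal to $\vec{b}_1,\dots,\vec{b}_{j-1}$ only; $\pi_j$ projects orthogonal to \emph{all} other basis vectors, and $\|\pi_j(\vec{b}_j)\|\le\|\vec{b}_j^*\|$, so a lower bound on $\|\vec{b}_j^*\|$ says nothing about $\|\pi_j(\vec{b}_j)\|$. And the ``subtract a suitable integer combination'' sketch conflates distance from $\vec{b}_j$ to the subspace $V_j$ (which is $\|\pi_j(\vec{b}_j)\|$) with distance to the sublattice $\cL_j=\cL\cap V_j$; the latter can be larger by up to the covering radius of $\cL_j$, which is of the order of the larger basis norms, so a small $\|\pi_j(\vec{b}_j)\|$ need not yield a short lattice vector. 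The clean route, and the one implicit in Heath-Brown, is the determinant identity $\det\cL=\det\cL_j\cdot\|\pi_j(\vec{b}_j)\|$. Combined with Hadamard's inequality $\det\cL_j\le\prod_{i\ne j}\|\vec{b}_i\|$ and the lower bound $\det\cL\gg\prod_i\|\vec{b}_i\|$ (which you have already established from Minkowski's second theorem plus $\|\vec{b}_i\|\asymp\lambda_i$), this gives $\|\pi_j(\vec{b}_j)\|\gg\|\vec{b}_j\|$ directly. With that substitution your projection argument for $|\lambda_j|\le\|\vec{x}\|/\|\pi_j(\vec{b}_j)\|\ll\|\vec{x}\|/\|\vec{b}_j\|$ is complete and correct.
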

 In our context, it is natural to consider the following modification of Definition \ref{defcosetunion}:
\begin{definition} Let $d$ be a non-negative integer. We define
$$\widetilde{\mathcal{G}}^{[d]}= \bigcup_{\text{codim } H \geq d} H(\bar{\mathbb{Q}}),$$
where the union runs over all connected algebraic subgroups $H \subset \mathbb{G}_m^n$ of codimension at least $d$.
\end{definition} We also state the so called Bogomolov property for fields that are the object of our work.
\begin{definition}\label{Bog} \cite{ADZ} We say that a subfield $L$ of $\overline{\mathbb{Q}}$ has the Bogomolov property  if there exists a constant $C(L)$ which depends only on $L$, such that for any $\alpha \in L^* \setminus \mathbb{U}$ we have $h(\alpha) \geq C(L)$.
\end{definition}
 \section{Proofs of the main results} Now we generalize \cite[Theorem 2.10]{OSSZ1} for higher dimension varieties.
\begin{theorem}\label{mainthm}
Let $X \subset G= \mathbb{G}_m^n$ be an irreducible closed subvariety defined over a number field $K$. Suppose that $L \supset K$ has the Bogomolov property. Then  $$X^{oa}\cap \widetilde{\mathcal{G}}^{[\dim X]}(L)$$ is finite. \end{theorem}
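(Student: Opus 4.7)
The plan is to argue by contradiction. Suppose there are infinitely many distinct points $P_k \in X^{oa,s}(L)$, each lying in some connected algebraic subgroup $H_k \subset \mathbb{G}_m^n$ of codimension at least $s$. For any fixed such $H$, a positive-dimensional component $Y$ of $X \cap H$ would satisfy $\dim Y \geq 1 > s + \dim H - n$ whenever $\mathrm{codim}\,H \geq s$, so it would be $s$-anomalous and hence excluded from $X^{oa,s}$. Therefore $X^{oa,s} \cap H(L)$ is a finite $0$-dimensional set for each $H$, and we may assume the $H_k$ are pairwise distinct. Lemma~\ref{boundedheight} gives a uniform bound $h(P_k) \leq M$, and, for $s \leq \dim X$ (where $X^{oa,s} \subseteq X^{oa}$), Lemma~\ref{finiteness} allows us to further assume no $P_k$ lies in a subgroup of codimension $\geq \dim X + 1$. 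Pigeonholing on codimension, we reduce to the case $\mathrm{codim}\, H_k = r$ constant, with $s \leq r \leq \dim X$.

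Next, realize each $H_k$ as the zero locus of a primitive lattice $\mathcal{L}_k \subset \mathbb{Z}^n$ of rank $r$ via Lemma~\ref{lemprim}. By Lemma~\ref{lem:HB}, choose a basis $\vec{b}_1^{(k)}, \ldots, \vec{b}_r^{(k)}$ of $\mathcal{L}_k$ with $\prod_j \|\vec{b}_j^{(k)}\| \ll \det \mathcal{L}_k$. Primitivity permits extending this to a $\mathbb{Z}$-basis of $\mathbb{Z}^n$, and the cobasis yields an algebraic isomorphism $\phi_k : \mathbb{G}_m^{n-r} \to H_k$ with explicitly controlled monomial exponents. Writing $P_k = \phi_k(Q_k)$ produces a tuple $Q_k \in L^{n-r}$ whose coordinates are monomials in those of $P_k$, hence lie in $L^*$ with bounded height.

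The technical heart of the argument, and the main obstacle, is to combine the Bogomolov lower bound with the controlled-exponent data above to force $Q_k$ to have only torsion coordinates for all but finitely many $k$. Each non-torsion coordinate of $Q_k$ would contribute height $\geq C(L)$ by Bogomolov, and, as the lattices $\mathcal{L}_k$ vary, this lower bound must be reconciled with the uniform upper bound $h(P_k) \leq M$ through the small-basis estimates of Lemma~\ref{lem:HB}. A convenient way to execute the argument is to invoke Maurin's theorem (Lemma~\ref{maurin}) iteratively: after adjoining a putative non-torsion coordinate to a finitely generated coordinate-product subgroup $\Gamma$, the enlarged set $X^{oa} \cap \Gamma \cdot \mathcal{G}^{[\dim X + 1]}$ still has bounded height, and tracking how many such non-torsion contributions are possible before the resulting bound becomes incompatible with the assumed infinite variation of $H_k$ forces the remaining coordinates of $Q_k$ to be roots of unity.

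Once $P_k$ is known to be torsion for infinitely many $k$, the proof concludes via the Zhang--Manin--Mumford theorem: the torsion of $X$ lies in a finite union of torsion cosets contained in $X$, but any positive-dimensional torsion coset $Y \subseteq X$ is $s$-anomalous (take $H$ to be the ambient torsion coset, so $\dim Y = \dim H > s + \dim H - n$ when $s < n$), hence excluded from $X^{oa,s}$. Therefore the torsion of $X^{oa,s}$ is finite, contradicting the infinitude of $\{P_k\}$ and proving the theorem. The ``in particular'' statement follows by specialising to $s = \dim X$.
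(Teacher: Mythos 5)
Your proposal identifies the right ingredients (primitive lattices for connected subgroups, Heath-Brown's small-basis lemma, bounded height from Habegger, the $0$-dimensionality of $X\cap H$ inside $X^{oa,s}$), but it does not actually close the argument. You explicitly flag ``the technical heart of the argument, and the main obstacle'' --- forcing the coordinates of $Q_k$ to be torsion --- and then only gesture at an iterated application of Maurin's theorem without specifying the iteration, what the finitely generated group $\Gamma$ would be, or how ``tracking how many such non-torsion contributions are possible'' yields a contradiction. As written this is a heuristic, not a proof. There is also a local confusion: you write ``$X^{oa,s}\subseteq X^{oa}$ for $s\le\dim X$,'' but $X^{oa,s}=\emptyset$ when $s<\dim X$, and for $s>\dim X$ the inclusion goes the other way ($X^{oa}\subseteq X^{oa,s}$), so your invocation of Lemma~\ref{finiteness} is only available after first observing that the nontrivial range is $s\ge\dim X$; you restrict to $s\le\dim X$, which covers only the trivial and the ``in particular'' cases. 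Finally, the passage to Manin--Mumford is a detour the problem does not require.

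The ingredient you are missing is Schlickewei's lemma (\cite[Lemma~2]{BMZ}): one can choose generators $g_1,\dots,g_r$ of the torsion-free part of the multiplicative group $\Gamma_P$ generated by the coordinates of $P$ so that $h(g_1^{b_1}\cdots g_r^{b_r})\gg |b_1|h(g_1)+\cdots+|b_r|h(g_r)$. Writing $\xi_i=\zeta_i\prod_j g_j^{m_{ij}}$ and combining this lower bound with Habegger's uniform upper bound on $h(\xi_i)$ and the Bogomolov lower bound $h(g_j)\gg 1$ directly bounds the exponents $|m_{ij}|$ independently of $P$. That bounds $\det\mathcal{M}$ for the exponent lattice $\mathcal{M}$; the primitivity of the lattice $\mathcal{L}$ defining the connected $H$ and its containment in $\mathcal{M}^{\perp}$ (together with Heath-Brown) then bound the defining exponents of $H$. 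This puts $H$ in a finite set, and since each $X^{oa,s}\cap H$ is finite (as you correctly showed), the theorem follows --- no torsion reduction and no Manin--Mumford needed. Your $\phi_k$-parametrization of $H_k$ also needs care: extending a short basis of $\mathcal{L}_k$ to a $\Z$-basis of $\Z^n$ does not by itself give controlled exponents for the complementary part, which is another place the argument, as you sketch it, has no teeth.
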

\begin{proof}
Let $s=\dim X$ and $P \in X^{oa,s}\cap \widetilde{\mathcal{G}}^{[s]}(L)$. Then the coordinates $\xi_i=x_i(P)$ generate a multiplicative subgroup $\Gamma_P \subset (L)^*$ of rank $ r \leq n -s$. By elementary abelian group decomposition, we can write
\begin{equation}\label{eq1.1}
\xi_i=\zeta_i \prod_{j=1}^r g_j^{m_{ij}}, \quad \quad i=1,...,n,
\end{equation} for generators $g_j \in L$ of the torsion-free part of $\Gamma_P$, integers $m_{ij}$ and roots of unity $\zeta_i \in L$.

It follows from a result of Schlickewei (  see \cite[Lemma 2]{BMZ}) that one can find generators $g_i$ so that , for any integers $b_1,...,b_r$, 
\begin{equation}
h(g_1^{b_1}...g_r^{b_r}) \geq c_r(|b_1| h(g_1)+...+|b_r| h(g_r)),
\end{equation}where $c_r>0$ depends only on $r$. By Lemma \ref{boundedheight}, the height of points in $X^{oa,s}\cap \mathcal{G}^{[s]}(\bar{\mathbb{Q}})$ is uniformly bounded, so that
 \begin{equation}
h \left(  \prod_{i=1}^r g_j^{m_{ij}}\right) \ll 1,
\end{equation} and the implicit constant depends only on $X$ and $n$.

By the Bogomolov property of $L$, we have 
\begin{equation}
h(g_j) \gg 1,
\end{equation} where the implicit constant depends only on $L$.
Combining (3.3) and (3.4), we obtain that the absolute values $|m_{ij}|$ are upper bounded independently of the point $P \in X^{oa,s}\cap \mathcal{G}^{[s]}(L)$. 

For instance, for such a point $P$ in $X^{oa,s}\cap \widetilde{\mathcal{G}}^{[s]}(L)$,  we recall that
$$
 P \in X^{oa,s}\cap  \bigcup_{\text{codim } H \geq s} H(L),
$$ and the union is taken over all connected algebraic subgroups $H \subset \mathbb{G}_m^n$ of codimension at least $s$. Since $X^{oa,s}= \emptyset$ for $s< \dim X$, we can suppose that $s \geq \dim X$. In this case, there exists a connected subgroup $H \subset G$ of codimension $d \geq s$ such that $P \in X^{oa,s}\cap H(L)$. We may write $H$ as defined by the equations
$$x_1^{a_{1j}}...x_n^{a_{nj}}=1, \qquad j=1,...,d.$$
 The rank $r$ of (3.1) is at most $n-d$, the linearly independent integer valued vectors $$(a_{1j},...,a_{nj}), \quad j=1,...,d$$ are orthogonal to the vectors $(m_{1j},...,m_{nj}), j=1,...,r$ and we claim that the $a_{ij}$'s can be chosen  so that they are also bounded independently of $P$. In fact, denoting the lattice generated by the vectors $(m_{1j},...,m_{nj}), j=1,...,r$ by $\mathcal{M}$, and making $\mathcal{L}=\sum_{j=1}^d \mathbb{Z} (a_{1j},...,a_{nj})$, the orthogonal set $$\mathcal{L}^\bot=\{u \in \mathbb{Z}^n : u.\ell=0 \text{ for all } \ell \in \mathcal{L}  \}$$ is also an integer primitive lattice, according to \cite[Section 2]{NS}(there the word complete is used instead of primitive).
  Since we can assume that $|a_{ij}|\ll \det(\mathcal{L})$ by Lemma 2.8, where the bound depends only on the dimension of the ambient space according to \cite[eq. (14)]{D}, and the subgroup $H$ is connected, the lattice $\mathcal{L}=\sum_{j=1}^d \mathbb{Z} (a_{1j},...,a_{nj})$ is primitive by Lemma 2.8  and so $\det(\mathcal{L})=\det(\mathcal{L}^{\perp})$ by \cite[Thm. 1]{NS}, where $\mathcal{L}^{\perp}$ is the orthogonal lattice to $\mathcal{L}$. The vectors of the integer lattice $\mathcal{M}$ spanned by the vectors $(m_{1j},...,m_{nj})$ are orthogonal to the vectors of $\mathcal{L}$, and thus $\mathcal{M} \subset \mathcal{L}^{\perp}$. If rank $(\mathcal{M})=r \leq n-d-1\leq n-\dim X -1,$ then the point $P$  is in $X^{oa,s}\cap \mathcal{G}^{[\dim X+1]} = X^{oa}\cap \mathcal{G}^{[\dim X+1]}$ which is finite by Lemma 2.5. Thus, we can assume that $r=n-d$. In this case, $\mathcal{M} \subset \mathcal{L}^{\perp}$ and both have the same rank. This implies $\det (\mathcal{L}^\perp)\leq \det(\mathcal{M})$, and therefore
$$
|a_{ij}|\ll \det(\mathcal{L})=\det(\mathcal{L}^{\perp})\leq \det(\mathcal{M}),
$$ which proves that the $a_{ij}$'s are bounded independently of $P$, because the $m_{ij}$'s are, and so is $\det(\mathcal{M})$, proving the claim. 
 
 Now we claim that $\dim (X \cap H) =0$ and thus $X \cap H$ is finite. In fact, if $\dim (X \cap H) \geq 1$, then for $Y $ the irreducible component of $ X \cap H \subset H$ containing $P$, we have that $$\dim Y  \geq 1 >0 \geq  s + \dim H -n=s-d.$$ Thus, $P \in X^{oa,s} \cap Y$ with $Y$ $s$-anomalous for $X$, which is a contradiction with the definition of $X^{oa,s}$. Thus, $X \cap H$ is finite. Since the $|a_{ij}|$ are bounded, $P$ belongs to a finite set.
 \end{proof}Since the abelian closure of a number field has the Bogomolov property (this was proved by Amoroso-Zannier\cite{AZ}), we deduce the following \begin{corollary}
Under the conditions of Theorem \ref{mainthm}, $$X^{oa}\cap \widetilde{\mathcal{G}}^{[\dim X]}(K_{ab})$$ is finite. 
\end{corollary}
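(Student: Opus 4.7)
The plan is to derive this corollary as an immediate specialization of Theorem \ref{mainthm}. The strategy has only one real ingredient beyond the theorem itself: exhibiting a field $L \supset K$ that is both reasonably large and still satisfies the Bogomolov property. The natural and essentially optimal candidate is $L = K_{ab}$, the abelian closure of the base number field $K$.

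The key fact to invoke is the theorem of Amoroso and Zannier \cite{AZ}, asserting that $K_{ab}$ has the Bogomolov property: there exists a constant $C(K_{ab}) > 0$ with $h(\alpha) \geq C(K_{ab})$ for every $\alpha \in K_{ab}^* \setminus \mathbb{U}$. With this in hand, the proof reduces to checking the two hypotheses of Theorem \ref{mainthm} for this choice of $L$: the containment $K \subset K_{ab}$ is immediate from the definition of the abelian closure, and the Bogomolov property of $L$ is exactly the Amoroso--Zannier statement. Theorem \ref{mainthm} then delivers the desired finiteness of $X^{oa,s} \cap \widetilde{\mathcal{G}}^{[s]}(K_{ab})$.

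There is no real obstacle in the deduction itself; all the depth sits in Theorem \ref{mainthm} and in the Amoroso--Zannier estimate quoted above. The only small verification worth pointing out is that the proof of Theorem \ref{mainthm} only draws on the Bogomolov constant $C(L)$ and on Schlickewei's generic height lower bound, both of which remain meaningful when $L$ is an infinite extension of $\Q$; hence replacing a number field by $K_{ab}$ introduces no additional difficulty, and the same template would yield an analogous statement for any other field $L \supset K$ for which the Bogomolov property is known (for instance $K_c$, or the compositum of $K$ with the maximal totally real subfield of $\overline{\Q}$).
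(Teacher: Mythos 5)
Your proposal is correct and matches the paper's own deduction exactly: the corollary is obtained by setting $L = K_{ab}$ in Theorem \ref{mainthm} and invoking the Amoroso--Zannier theorem \cite{AZ} for the Bogomolov property of the abelian closure. Nothing further is needed.
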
 If we enlarge subgroups multiplying them by a finitely generated group, we can prove a related  result if we cut sets modulo torsion, if we assume that $X$ is \textit{non-degenerate} (\cite[Def. 1.1]{M}). We write $x \sim y$ in $\mathbb{G}_m^n(\overline{\mathbb{Q}})$ if $x=uy$ with $u \in \mathbb{U}^n.$ Also, let $\Gamma_\epsilon$ be the subset of $\Gamma$ of elements having height at most $\epsilon$ and for $S \subset G$ define $\mathcal{B}(S,\epsilon)=\{xy \in G : x \in S, y \in G \text{ with } h(y)\leq \epsilon \}.$
 \begin{theorem}
Under the same conditions of Thm. 3.1, assume that $X$ is non-degenerate, that $L/K$ has the Bogomolov property, and that $\Gamma$ is a subgroup of finite rank of $G(L)$. Then $$
(X^{oa}\cap \Gamma \cdot \tilde{\mathcal{G}}^{[\dim X + 1]}(L))/\sim \textit{ is finite.} 
$$
\end{theorem}
\begin{proof}
Let $s=\dim X +1$. Note that for any given $\epsilon \geq 0,$ one has that $ \Gamma_\epsilon/\sim$ is finite. In fact, as in the proof of Theorem 3.1, the elements of $\Gamma_\epsilon$ have height bounded by $\epsilon$, and the lemma of Schlickewei used in the proof of Thm. 3.1 together with the Bogomolov property imply a lower bound for elements in $\Gamma$, which together show that the exponents of given generators of $\Gamma$ representing any element of $\Gamma_\epsilon$ are uniformly bounded. Thus, $ \Gamma_\epsilon/\sim$ is finite. Hence,
$$
(X^{oa}\cap \Gamma_{\epsilon} \cdot \tilde{\mathcal{G}}^{[s]}(L))/\sim  \text{ } \subset \left(\bigcup_{\gamma \in \Gamma_\epsilon}\gamma\cdot((\gamma^{-1}X)^{oa}\cap \cdot \tilde{\mathcal{G}}^{[s]}(L))\right)/\sim
$$ is finite by Lemma 2.5 and the finiteness of $\Gamma_\epsilon/\sim.$ Moreover, by Maurin's \cite[Thm. 11.8]{M} and the arguments in its proof applied to $\mathcal{B}(\tilde{\mathcal{G}}^{[s]}(L),\epsilon)$ in place of $\mathcal{C}(\tilde{\mathcal{G}}^{[s]}(L),\epsilon)$, there exist $\gamma_1,..,\gamma_N \in \Gamma$ and $\epsilon_1>0$ so that
$$
X^{oa}\cap \Gamma \cdot \tilde{\mathcal{G}}^{[s]}(L) \subset \bigcup_{i=1}^N X^{oa}\cap \gamma_i \cdot \mathcal{B}(\tilde{\mathcal{G}}^{[s]}(L),\epsilon_1).
$$Intersecting both sides of the inclusion above with $\Gamma \cdot \tilde{\mathcal{G}}^{[s]}(L)$ taking quotients by $\sim$ we obtain
$$
X^{oa}\cap \Gamma \cdot \tilde{\mathcal{G}}^{[s]}(L)/\sim \text{ }\subset \bigcup_{i=1}^N X^{oa}\cap \gamma_i \cdot (\Gamma_{\epsilon_1}\cdot \tilde{\mathcal{G}}^{[s]}(L))/\sim\text{ }=\bigcup_{i=1}^N \gamma_i \cdot \left((\gamma_i^{-1}X)^{oa}\cap \Gamma_{\epsilon_1}\cdot \tilde{\mathcal{G}}^{[s]}(L)) \right)/\sim,
$$ and the last set is the union of finite sets again.
\end{proof}

 \subsection{A structure theorem over $K_{ab}$}
 Here we generalise \cite[Theorem 2.1]{OSSZ1} to higher dimensional varieties.
 
 \begin{definition}We say that a $d$-uple $$\phi:(x_1,...,x_n) \mapsto (x_1^{a_{11}}...x_n^{a_{n1}},...,x_1^{a_{1d}}...x_n^{a_{nd}})$$ of characters (called a $d$-\textit{character}) is \textit{primitive} if the lattice $\sum_{j=1}^d \mathbb{Z} (a_{1j},...,a_{nj})$ in $ \mathbb{Z}^n$ is primitive. For $X \subset \mathbb{G}_m^n$ an algebraic variety, we denote by $\phi_X$ the restriction of $\phi$ to $X$.\end{definition}
 \begin{definition} Let $X \subset \mathbb{G}_m^n$ be an algebraic variety of dimension $\dim X=d$.
 We define $\Phi_X^{[d]}$ to be the set of primitive $d$-characters $\varphi=(\phi_1,...,\phi_{d}):\mathbb{G}_m^n \rightarrow \mathbb{G}_m^{d}$ with the property that there exists a birational map $\rho: \mathbb{G}_m^{d} \rightarrow X$ such that $$(\varphi_X \circ \rho) (t_1,...,t_{d})= (t_1^{m_1},...,t_{d}^{m_{d}})$$ for integers $m_1,...,m_{d}$.
 
 We use $\Phi_{X,c}^{[d]}$ to denote the subset of $\Phi_X^{[d]}$ of characters for which $\rho$ is defined over $K_c$.
 \end{definition}

\begin{theorem}\label{structural}
Let $X \subset \mathbb{G}_m^n$ be an irreducible closed subvariety of dimension $d$ defined over a number field $K$. If $X$ is not $d$-anomalous, then $\Phi^{[d]}_X$ is finite. Moreover, there exists a finite union $W$ of proper cosets of $\mathbb{G}_m^d$ such that $X^{oa}\cap \mathcal{G}^{[d]}(K_{ab})$ is contained in the union of $\bigcup_{\phi \in \Phi_{X,c}^{[d]}}\phi_X^{-1}(\mathbb{U}^d)$, a finite set, and sets $\rho^{-1}(W\cap \mathbb{U}^d)$ where $\rho:X \rightarrow  \mathbb{G}_m^d$ belongs to a prescribed finite set of rational maps.
\end{theorem}

\begin{proof}
Let $\widetilde{X}$ be a smooth projective model defined over $K$ and $K$-birational to $X$, which exists by Hironaka's resolution of singularities \cite{Hi}. Given a $d$-character $\varphi=(\varphi_1,...,\varphi_d)$, its restriction $\varphi_X=(\varphi_{1X},...,\varphi_{dX})$ to $X$ is a $d$-uple of rational functions on $X$, and also on $\widetilde{X}$, so we may consider $(\text{div}(\varphi_{1X}),...,\text{div}(\varphi_{dX}))$ inside $(\text{Div}(\widetilde{X}))^d$. We then have a homomorphism defined by $$\varphi \mapsto (\text{div}(\varphi_{1X}),...,\text{div}(\varphi_{dX})) \in (\text{Div}(\widetilde{X}))^d.$$ We note that this map is injective, since any map in the kernel yields $\varphi_{iX}$ constant for each $i$, and thus $X$ itself would be $d$-anomalous, which cannot happen.

Let $\varphi \in \Phi_X^{[d]}$, so we may write $\varphi_X \circ \rho(t_1,...,t_d)=(t_1^{m_1},...,t_d^{m_d})$, where $\rho: \mathbb{G}_m^d \rightarrow X$ is a birational isomorphism and $(t_1,...,t_d)$ are coordinates on $\mathbb{G}_m^d$. We may extend $\rho$ to a birational map $\rho: (\mathbb{P}_1)^d \rightarrow \widetilde{X}$, and the same equation holds on viewing $\varphi_X$ as a map from $\widetilde{X}$ to $(\mathbb{P}_1)^d$. Hence $\varphi_{iX}$ can only have one zero and one pole, and so its divisor is of the shape $M((Y_i)-(Z_i))$ for each $i$. And since $\varphi_{iX}$ is a monomial in the coordinates, $Y_i,Z_i$ necessarily lie among the zeros and poles of the coordinate functions $x_1,...,x_n$ viewed as functions on $\widetilde{X}$, and so there are only finitely many possibilities for the prime divisors $Y_i, Z_i$.

Finally, if $\varphi=(\varphi_1,...,\varphi_d)$ and $ \psi=(\psi_1,...,\psi_d)$ lie in $ \Phi_X^{[d]}, j \in \mathbb{N}, Y_i, Z_i$ are prime divisors for each $i=1,...,j$ and $D_{j+1},...,D_d$ are divisors such that
$$
(\text{div}(\varphi_{1X}),...,\text{div}(\varphi_{dX}))=(M_1(Y_1-Z_1),...,M_j(Y_j-Z_j),D_{j+1},...,D_d)
$$ and
$$
(\text{div}(\psi_{1X}),...,\text{div}(\psi_{dX}))=(L_1(Y_1-Z_1),...,L_j(Y_j-Z_j),D_{j+1},...,D_d),
$$ then $$(\text{div}(\varphi^{L_1}_{1X}),...,\text{div}(\varphi^{L_j}_{jX}),...,\text{div}(\varphi_{dX}))= (\text{div}(\psi^{M_1}_{1X}),...,\text{div}(\psi^{M_j}_{jX}),...,\text{div}(\psi_{dX})),$$ and hence $$((\varphi^{L_1}_{1X}),...,(\varphi^{L_j}_{jX}),...,(\varphi_{dX}))= ((\psi^{M_1}_{1X}),...,(\psi^{M_j}_{jX}),...,(\psi_{dX}))$$ by injectivity. 
Both $\varphi, \psi$ are primitive, therefore $L_i=\pm M_i$ for each $i$. We can make an analogous construction  with the first $i$ coordinates replaced by other $i$ random coordinates. So each $d$-uple of pairs $((Y_1,Z_1),...,(Y_d,Z_d))$ can give rise to at most $2^d$ elements of $\Phi_X^{[d]}$, and the finiteness of $\Phi_X^{[d]}$ follows.

Let $P \in X^{oa}\cap {\mathcal{G}}^{[d]}(K_{ab})$, so the coordinates $\xi_i=x_i(P)$ generate a multiplicative subgroup $\Gamma_P \subset (K_{ab})^*$ of rank $ r \leq n -d$, and we can assume by Lemma \ref{finiteness} that $r=n-d$. By elementary abelian group decomposition again, we can write
\begin{equation}\label{eq1.1}
\xi_i=\zeta_i \prod_{j=1}^r g_j^{m_{ij}}, \quad \quad i=1,...,n,
\end{equation} for generators $g_i \in K_{ab}$ of the torsion-free part of $\Gamma_P$, integers $m_{ij}$ and roots of unity $\zeta_i \in K_{ab}$.

As in the proof of Theorem 3.1, we derive that
 \begin{equation}
h \left(  \prod_{i=1}^r g_j^{m_{ij}}\right) \ll 1,
\end{equation} and the implicit constant depends only on $X$ and $n$.

By the Bogomolov property of $K_{ab}$, we have 
\begin{equation}
h(g_j) \gg 1,
\end{equation} where the implicit constant depends only on $K$.
Combining (3.6) and (3.7), we obtain that the absolute values $|m_{ij}|$ are upper bounded independently of any point $P \in X^{oa}\cap \mathcal{G}^{[d]}(K_{ab})$. 
 For such, we can take
$$
(b_{1i},...,b_{ni}) \in \mathbb{Z}^n, i=1,...,d
$$ nonzero integer vectors forming a basis for the orthogonal vector space to the $(m_{1j},...,m_{nj})$, whose associate lattice we can suppose to be primitive as before. In this way, we see that we can obtain a primitive $d-character$ $\pi$ from a prescribed finite set, and for $P=(\xi_1,...,\xi_n)$ as above, we have that $\pi(P)=(\zeta_{P_1},...,\zeta_{P_n})$ where the $\zeta_{Pi}$'s are roots of unity. 
Therefore $\pi$ can be taken from  a prescribed finite set, and sends the point $P$ to $\mathbb{U}^d$. Thus we only need to treat the case in which infinitely many points in $X^{oa}\cap \mathcal{G}^{[d]}(K_{ab})$ are sent to $\mathbb{U}^d$ by $\pi$. The preimage of  an element in $\mathbb{U}^d$ by $\pi$ is a $d$-dimensional coset $H$. If $X\cap H$ was infinite, $X$ would be $d$-anomalous, hence we may assume that
$\pi_X(X^{oa}\cap \mathcal{G}^{[d]}(K_{ab})) \cap \mathbb{U}^d$ is infinite.

Let $T_k$ be the set of torsion points of $\mathbb{G}_m^k, k \geq 1$. For a rational map $\tau$ from a geometrically irreducible variety $Y$ to $\mathbb{G}_m^k$, the (PB) condition in \cite{Z1} means that for any integer $m>0$, the pullback $\mathbb{G}_m^k\times_{[m],\tau} Y$ is geometrically irreducible, where $[m]$ is the $m$-th power map. The result in \cite[Theorem 2.1]{Z1} asserts that:
\begin{align*}
&\textit{If } \tau \textit{ is a cover (that is, dominant rational map of finite degree) defined} \\& \textit{over } K_c \textit{ and satisfies the (PB) condition, there exists a finite union } W \\&\textit{of proper cosets such that if } v \in T_k \setminus W, \textit{ then } v \in \tau(Y) \textit{ and if } \tau(u)=v, \\& \textit{ then } [K_c(u):K_c]= \deg \tau.
\end{align*} We want to apply this.

Recall that $\pi_X(X^{oa}\cap \mathcal{G}^{[d]}(K_{ab})) \cap \mathbb{U}^d$ is infinite. Note that since $\pi_X$ is defined by a primitive lattice, the induced pullback map between the rings of coordinate functions is injective, and so such map is dominant, and  $\pi_X: X \rightarrow \mathbb{G}_m^d$ is a cover.

We can now factor $\pi_X$ as $\lambda_0 \circ \rho_0$ according to the second claim of \cite[Proposition 2.1]{Z1} such that $\lambda_0 :Y \rightarrow \mathbb{G}_m^d$ is an isogeny of algebraic groups and $\rho_0:X \rightarrow Y$ is a rational map satisfying the (PB) condition. Since there is a dual isogeny $\widehat{\lambda_0}: \mathbb{G}_m^d \rightarrow Y$, we see that $Y$ is isomorphic to a quotient of $\mathbb{G}_m^d$ by a finite subgroup whose elements have coordinates that are all roots of unity, and hence such subgroup is a kernel of a product of power maps $[m_1]\times...\times [m_d]: \mathbb{G}_m^d \rightarrow \mathbb{G}_m^d$ for integers $m_1,...,m_d \geq 1$. We can see that $\mathbb{G}_m^d$ modulo such kernel is isomorphic to $\mathbb{G}_m^d$. Therefore, $Y$ is isomorphic to $\mathbb{G}_m^d$ as algebraic groups. Thus, we may factor
$$
\pi_X=([m_1]\times...\times [m_d]) \circ \rho,
$$where $\rho:X \rightarrow \mathbb{G}_m^d$ is a rational map satisfying the (PB) condition. Since there are only finitely many $\pi_X$, there are only finitely many such integers $m_1,...,m_d$. Since $\pi$ and $[m_1],...,[m_d]$ are defined over $\mathbb{Q}$, $\rho$ is defined over $K_c$. We can enlarge the field $K$ by adding the finitely many $m_i$-th roots of unity, so that $\rho$ is now supposed to be defined over $K$.

Moreover, for each point $P \in X(K_{ab})$ and any Galois automorphism $\sigma$ over $K$, denoting by $P^\sigma$ the image of $P$ under $\sigma$, we have that $\rho(P^\sigma)=\rho(P)^\sigma$. 

Since $\pi_X$ is a cover, $\rho$ is also a cover. Thus the product cover
$$
\psi =\rho \times \rho: X \times X \rightarrow \mathbb{G}_m^d \times \mathbb{G}_m^d
$$ is defined over $K$ and also satisfies the (PB) condition, with $\deg \psi= (\deg \rho)^2$.

Since $X$ is defined over $K$, if $P \in X^{oa}\cap \mathcal{G}^{[d]}(K_{ab})$, then $P^\sigma \in X^{oa}\cap \mathcal{G}^{[d]}(K_{ab})$  for any Galois automorphism $\sigma$ over $K$. In view of the infinity of $\mathbb{U}^d \cap \pi_X(X(K_{ab}))$, we know that  $\mathbb{U}^d \cap \rho(X(K_{ab}))$ is also infinite. Thus considering the set $S$ of images of points of the form $(P,P^\sigma)$ under $\psi$ for any $P \in \rho^{-1}(\mathbb{U}^d) \cap X(K_{ab})$ and any Galois automorphism $\sigma$ over $K$, $S$ is an infinite set and is a torsion subset of $(\mathbb{G}_m^d)^2$, whose elements have the form $(\zeta_1,...,\zeta_d,\zeta_1^\sigma,...,\zeta_d^\sigma) \in (\mathbb{G}_m^d)^2$ with $\zeta_1,...,\zeta_d$ are roots of unity.

Applying \cite[Theorem 2.1]{Z1} to $\rho$ and $\psi$, we see that for such points $(P,P^\sigma)$ whose image under $\psi$ is outside of a finite union of proper cosets of $(\mathbb{G}_m^d)^2$ with $P \in X(K_{ab})$, we have that
$$
[K(P):K]= \deg \rho, [K(P,P^\sigma)]=\deg \psi= (\deg \rho)^2.
$$ Note that $[K(P,P^\sigma)]=[K(P):K]$ due to the normality of $K(P)/K$, and this implies that $\deg \rho =1$. Hence, $\rho$ is birational, $\pi_X \circ \rho^{-1}=[m_1]\times... \times [m_d]$ is an isogeny of $\mathbb{G}_m^d$, and thus $\pi \in \Phi^{[d]}_{X,c}$. This completes the proof.
\end{proof} The next result extends \cite[Cor. 2.3]{OSSZ1}
\begin{corollary}
Under the conditions of Theorem \ref{structural}, $$X^{oa}\cap \mathcal{G}^{[d]}(K_{ab}) \textit{ is the union of }X^{oa}\cap \mathcal{G}^{[d]}(K_{c}) \textit{ and a finite set.}$$
\end{corollary}
\begin{proof}
We first use Theorem \ref{structural}, and the fact that $$\bigcup_{\phi \in \Phi_{X,c}^{[d]}}\phi_{X}^{-1}(\mathbb{U}^{d}) \subset X(K_c)$$ by the definition of $\Phi_{X,c}^{[d]}$. Also, from the last conclusions in the proof of Theorem 3.6, we have $$\rho^{-1}=\pi_X^{-1}\circ [m_1]\times...\times[m_d]$$ for a primitive $d$-character $\pi \in \Phi_{X,c}^{[d]}$
so that $\pi_X^{-1}(\mathbb{U}^d) \subset X(K_c)$. The equation above then implies $\rho^{-1}(\mathbb{U}^d) \subset X(K_c)$.
\end{proof}
\subsection{Abelian points in curves and periodic hypersurfaces} Here we briefly look to similar problems in the context of arithmetic dynamics, in particular when curves intersect periodic hypersurfaces, following the recent dynamical version of the bounded height result for curves, , we obtain a finiteness result over the cyclotomic closure.

\begin{lemma}\cite[Theorem 4.3]{N}\label{lemdynamics}
Let $K$ be a number field. Let $f \in K[X]$ be polynomial that is  not linear conjugate to a monomial or  Chebyshev polynomial, and $\varphi=(f,...,f): (\mathbb{P}_K^1)^n \rightarrow (\mathbb{P}_K^1)^n$ be the corresponding split polynomial map. Let $C$ be an irreducible curve in $(\mathbb{P}_K^1)^n$ that is not contained in any periodic hypersurface. Assume that $C$ is non-vertical, by which we mean $C$ maps surjectively onto each factor $\mathbb{P}^1$ of $(\mathbb{P}^1)^n$. Then the points in
$$
\bigcup_{V}(C(\overline{K}) \cap V(\overline{K}))
$$ have bounded Weil heights, where $V$ ranges over all periodic hypersurfaces of $ (\mathbb{P}_K^1)^n$.
\end{lemma}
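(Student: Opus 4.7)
My approach is to reduce the Weil-height bound to a bound on the Call--Silverman canonical height attached to $f$. Let $d=\deg f$ and let $\hat{h}_f\colon \mathbb{P}^1(\overline K)\to \mathbb{R}_{\geq 0}$ be the canonical height characterised by $\hat{h}_f\circ f=d\cdot\hat{h}_f$; it satisfies $\hat{h}_f=h+O(1)$, and $\hat{h}_f(\alpha)=0$ exactly when $\alpha$ is preperiodic for $f$. Define $\hat{H}(P_1,\dots,P_n):=\sum_{i=1}^n\hat{h}_f(P_i)$ on $(\mathbb{P}^1)^n$. Since $\hat{H}=h+O(1)$ globally on $(\mathbb{P}^1)^n$, it suffices to prove that $\hat{H}$ is uniformly bounded on $\bigcup_V (C(\overline K)\cap V(\overline K))$ as $V$ ranges over periodic hypersurfaces.

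The next step is to exploit the very restricted structure of $\varphi$-periodic hypersurfaces. Since $f$ is neither linearly conjugate to a monomial nor to a Chebyshev polynomial, the Medvedev--Scanlon classification applies: every irreducible $\varphi$-periodic hypersurface of $(\mathbb{P}^1)^n$ is built from ``vertical'' slices $\{x_i=\alpha\}$ with $\alpha$ preperiodic for $f$, together with ``skew'' two-variable relations of the form $\{g(x_i)=\tilde g(x_j)\}$ where $g,\tilde g$ commute with a common iterate of $f$. This collapses the a priori infinite family of $V$'s into countably many combinatorial types, each described by a bounded amount of algebraic data once the orbit type of $\alpha$ and the commutation relation are fixed.

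With this structure in hand, for each fixed combinatorial type of $V$, the intersection $C\cap V$ is zero-dimensional (otherwise $C$ would lie in a $\varphi$-periodic hypersurface, contradicting the hypothesis), and the dynamical equation $\varphi^N(V)=V$ together with $\varphi^N(P)\in V$ for $P\in C\cap V$ forces the canonical heights $\hat{h}_f(p_i)$ to satisfy scaling relations compatible with staying in $V$. Coupling this constraint with arithmetic equidistribution on $C$ applied to $\hat{h}_f$ (in the spirit of Yuan, and Ghioca--Nguyen--Ye in the dynamical setting), one rules out arbitrarily large $\hat{H}(P)$, since an accumulation argument would otherwise produce a positive-dimensional component of some periodic $V$ inside $C$. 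Translating $\hat{H}(P)\ll 1$ back via $h=\hat{H}+O(1)$ yields the bounded-height conclusion.

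The main obstacle, to my mind, is not the qualitative classification of the $V$'s themselves but rather \emph{uniformity} in $V$: one must ensure that the canonical-height bound does not deteriorate as the period $N$ of $V$ grows unboundedly. I expect that the Medvedev--Scanlon dichotomy is precisely what delivers this finiteness, since the commuting polynomials appearing in skew relations are themselves essentially iterates of $f$ up to symmetry; this should reduce the estimate to finitely many atomic algebraic relations on $C$, and a single uniform equidistribution argument applied to all of them simultaneously should then close the bound.
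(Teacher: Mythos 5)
The statement you are proving is cited in the paper directly from Nguyen (reference [N], Theorem 4.3), so the paper provides no proof of its own; the closest we see to the argument is the fragment reproduced in the proof of Proposition \ref{thmdynamics}. Comparing your sketch against that fragment, your general scaffolding is right and matches Nguyen's: replace the Weil height by the Call--Silverman canonical height $\hat h_f$, reduce to pairs of coordinates, and invoke the Medvedev--Scanlon classification so that the relevant periodic curves are $\{x=\zeta\}$ with $\zeta$ preperiodic or $\{y=g(x)\}$ with $g$ commuting with an iterate of $f$.

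Where you diverge, and where there is a genuine gap, is the actual bounding step. You propose an ``equidistribution / accumulation'' argument: if $\hat H(P)$ were unbounded over $\bigcup_V C\cap V$, an accumulation would produce a positive-dimensional periodic component inside $C$. As stated this does not follow. The points in question can have canonical heights bounded away from $0$ (so they do not equidistribute to the canonical measure), and a sequence of isolated points on $C$ with drifting heights does not accumulate to a curve in any obvious sense. You correctly identify the real difficulty -- \emph{uniformity in $V$}, i.e.\ in $\deg g$ and in the period $N$ -- but you leave it open, saying only that you ``expect'' Medvedev--Scanlon delivers it. It doesn't by itself: the classification tells you the \emph{shape} of the periodic curves, not how the intersection numbers or local heights behave as $\deg g\to\infty$. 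Nguyen's proof closes this gap with a concrete quantitative estimate (his equation (31)), which for a point $(\alpha,\beta)\in C\cap\{y=g(x)\}$ yields
\[
\hat h_f(\alpha)\,\deg g \ \ll\ \sqrt{\hat h_f(\alpha)\,\deg g}\,,
\]
obtained by intersection-theoretic/Arakelov bookkeeping on $C$ rather than by equidistribution. This immediately forces $\hat h_f(\alpha)\,\deg g\ll 1$, which simultaneously bounds $\hat h_f(\alpha)$, $\hat h_f(\beta)=(\deg g)\hat h_f(\alpha)$, and (away from preperiodic $\alpha$) bounds $\deg g$. That single inequality is the actual content of the theorem; your sketch names the right ingredients but does not supply a substitute for it, and the equidistribution route would need substantial new input to deliver the required uniformity.
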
 The next result is basically contained in the previous one and its proof reproduced here is contained in the original proof in \cite{N}.
\begin{prop}\label{thmdynamics}
Under the conditions of \ref{lemdynamics}, let $L/K$ be an algebraic extension such that $L$ is a field that has the Bogomolov property. Then $\bigcup_{V}(C(L) \cap V(L))$ for $V$ ranging over all periodic hypersurfaces of $ (\mathbb{P}_K^1)^n$ is the union of a finite set with a set whose elements have at least one coordinate periodic for $f$ and height bounded by $\sup_{P \in \text{Per}(f)} h(P)$.
\end{prop}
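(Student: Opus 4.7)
My plan is to combine the bounded-height statement of Lemma \ref{lemdynamics} with the Bogomolov property of $L$, following the strategy embedded in the proof of that lemma in \cite{N}.

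By Lemma \ref{lemdynamics}, every $P = (x_1,\ldots,x_n) \in \bigcup_V (C(L) \cap V(L)) \subseteq \bigcup_V (C(\overline{K}) \cap V(\overline{K}))$ satisfies $h(P) \leq B$ for a uniform constant $B = B(K,C,f)$; in particular $h(x_i) \leq B$ for every coordinate. I would then partition the set in question according to whether some coordinate $x_i$ is periodic for $f$. If so, trivially $h(x_i) \leq \sup_{Q \in \mathrm{Per}(f)} h(Q)$, and $P$ belongs to the second set in the claimed decomposition.

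The remaining task is to show that points $P$ with \emph{no} periodic coordinate form a finite set. For this I would reopen the argument in \cite{N}. Its proof of bounded height implicitly produces, for each $P \in \bigcup_V (C \cap V)$, either a periodic coordinate or a relation between the $x_i$'s that, when iterated under the split map $\varphi$ and combined with a Medvedev--Scanlon-type classification of periodic hypersurfaces, would otherwise yield sequences of non-preperiodic points of arbitrarily small canonical height---contradicting $|\hat{h}_f - h| = O(1)$ together with the uniform bound $B$. The Bogomolov property of $L$ is what converts this ''bounded-height sequence in $L$'' into genuine finiteness in the absence of Northcott's theorem (since $[L:K]$ may be infinite): each $x_i \in L$ with $h(x_i) \leq B$ is either a root of unity or has $h(x_i) \geq c_L > 0$, so the admissible $x_i$ are effectively discrete, and combining this with $P \in C$ and the non-verticality hypothesis on $C$ bounds the number of possible $P$.

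The main obstacle, in my view, lies precisely in that last transition: the Bogomolov dichotomy (torsion versus height $\geq c_L$) does not automatically match the dynamical dichotomy (periodic for $f$ versus not) when $f$ is not linearly conjugate to a monomial or Chebyshev polynomial, because roots of unity and $f$-preperiodic points are unrelated sets in the non-exceptional case. Bridging this gap requires unpacking how the combinatorics of periodic hypersurfaces in \cite{N} force any coordinate constrained by the Bogomolov lower bound to in fact be $f$-periodic, so that every ''exceptional'' point surviving the Bogomolov argument lands in the second component of the decomposition rather than escaping to infinity.
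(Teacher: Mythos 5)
Your proposal correctly identifies the two main inputs (the bounded-height statement of Lemma~\ref{lemdynamics} and the Bogomolov property of $L$) and correctly guesses that the decomposition is driven coordinate-by-coordinate and by a dichotomy between ``periodic coordinate'' and ``finite set,'' but it does not supply the ingredient that actually bridges the two, and the sketch you give for the ``no periodic coordinate'' part would not produce finiteness as written.

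The paper's proof, following the argument in \cite{N}, first reduces (via \cite[Theorem 2.2]{N}) to $n=2$ and to periodic curves $V$ whose defining equation involves only two coordinates, and then uses the structural classification of periodic curves for a split non-exceptional map: every such $V$ is either of the form $\{x=\zeta\}$ with $\zeta$ $f$-periodic, or of the form $\{y=g(x)\}$ with $g$ commuting with an iterate of $f$. The first type produces exactly the second component of the claimed decomposition (the relevant coordinate is literally periodic, with height $\le \sup_{Q\in\mathrm{Per}(f)} h(Q)$). For the second type, the key is the explicit inequality from equation (31) in the proof of \cite[Theorem 4.3]{N}, namely $\hat h(\alpha)\deg g \ll \sqrt{\hat h(\alpha)\deg g}$ for $\deg g$ large, together with $|\hat h - h| = O(1)$; combined with the Bogomolov lower bound $h(\alpha)\gg 1$ this forces $\deg g\ll 1$, and then \cite[Proposition 2.3(d)]{N} says there are only finitely many $g$ of bounded degree, hence only finitely many intersection points $C\cap\{y=g(x)\}$. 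Your proposal gestures at a Medvedev--Scanlon-type classification and at ``sequences of non-preperiodic points of arbitrarily small canonical height,'' but that is a different mechanism from the actual degree bound on $g$, and your fallback claim that the Bogomolov dichotomy makes ``the admissible $x_i$ effectively discrete'' so that non-verticality bounds the number of $P$ is false: a field like $K_{\mathrm{ab}}$ contains infinitely many elements of bounded height above the Bogomolov threshold, so bounded height over $L$ alone does not give finiteness. The obstacle you flag at the end is real, and the paper resolves it precisely via the $\{x=\zeta\}$ / $\{y=g(x)\}$ case split and the degree bound on $g$; without those your argument cannot be completed.
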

\begin{proof}
It is enough to show that the claim is true for each one of the  coordinates $x_i$ of the points in the referred set. By \cite[Theorem 2.2]{N}, it suffices to show that the claim is true for points in $\bigcup(C(\overline{K}) \cap V_{ij}(\overline{K}))$, where $V_{ij}$ ranges over all periodic hypersurfaces whose equations involve $x_i$ and $x_j$ only. Thus, we may assume $n=2$ and use coordinates $x$ and $y$. By \cite[Theorem 2.2]{N}, we only need to consider the intersection with periodic curves given by an equation of the form $x=\zeta$ where $\zeta$ is $f$-periodic, or $y=g(x)$ where $g$ commutes with an iterate of $f$. Hence, we only need to deal with the second case. With this goal, given a point $(\alpha,\beta)$ in an intersection like that, we notice that the equation (31) in the proof of \cite[Theorem 4.3]{N} and the paragraph below the same equation give together that
$$
\hat{h}(\alpha)\deg g \ll \sqrt{\hat{h}(\alpha) \deg g} 
$$ for $\deg g$ sufficiently large and $\hat{h}$ a certain canonical height associated with $f$ and commensurate to $h$. This implies that $$ \sqrt{h(\alpha) \deg g} \ll 1.$$ By the Bogomolov property, this yields $\deg g \ll 1$, and by \cite[Proposition 2.3(d)]{N}, there are only finitely many such $g$'s with bounded degree, and hence only finitely many points in the intersection $C \cap \{y=g(x) \}$.
\end{proof}Below, our finiteness result follows.
\begin{theorem}
Under the conditions of \ref{lemdynamics}, the set 
$$\bigcup_{V}(C(K_c) \cap V(K_c))$$ is finite, where $V$ ranges over all periodic hypersurfaces of $ (\mathbb{P}_K^1)^n$.
\end{theorem}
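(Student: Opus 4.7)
The plan is to apply Proposition \ref{thmdynamics} with $L = K_c$. The cyclotomic closure $K_c$ is contained in $K_{ab}$, which by Amoroso--Zannier \cite{AZ} has the Bogomolov property; since the Bogomolov property is trivially inherited by subfields, so does $K_c$. The proposition therefore writes $\bigcup_{V}(C(K_c) \cap V(K_c))$ as the union of a finite set with a set $S$ whose elements $P \in C(K_c)$ have at least one coordinate $x_i(P)$ that is $f$-periodic. It remains to prove that $S$ is finite.

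For $P \in S$, the offending coordinate $x_i(P)$ lies in $\Per(f) \cap K_c$. At this stage I would invoke the finiteness of $\Per(f) \cap K_c$, which holds precisely because $f$ is not linearly conjugate to a monomial or a Chebyshev polynomial; this is a dynamical statement of Dvornicich--Zannier type, excluding the only two families ($f = x^d$, with its roots of unity as periodic points, and the Chebyshev case with its cosine values) for which abelian periodic points can proliferate. Granted this finiteness, for each index $i \in \{1,\ldots,n\}$ and each $\zeta \in \Per(f) \cap K_c$, the fiber $\pi_i^{-1}(\zeta) \cap C$ is finite, since by the non-verticality hypothesis the coordinate projection $\pi_i \colon C \to \mathbb{P}^1$ is a non-constant morphism of an irreducible curve, hence finite-to-one. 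Consequently
\[
S \;\subseteq\; \bigcup_{i=1}^{n}\ \bigcup_{\zeta \in \Per(f) \cap K_c} \pi_i^{-1}(\zeta) \cap C(K_c)
\]
is a finite union of finite sets, and combined with the finite set from Proposition \ref{thmdynamics} this yields the desired conclusion.

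The main obstacle, and essentially the only ingredient not already present in the excerpt, is securing the finiteness of $\Per(f) \cap K_c$ under the standing hypothesis on $f$. I would quote this directly from the literature (it belongs to the Dvornicich--Zannier circle of ideas, and is presumably also implicit in Nguyen's paper \cite{N}, from which Proposition \ref{thmdynamics} is extracted) rather than attempt an ad hoc proof; once it is in place, all other steps are essentially formal.
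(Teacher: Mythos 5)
Your proposal is correct and follows essentially the same route as the paper: apply Proposition~\ref{thmdynamics} with $L=K_c$ (which has the Bogomolov property as a subfield of $K_{ab}$), and then show the remaining set of points with an $f$-periodic coordinate in $K_c$ is finite. The one ingredient you gesture at without citing precisely --- the finiteness of $\Per(f)\cap K_c$ when $f$ is not linearly conjugate to a power map or Chebyshev polynomial --- is exactly the external result the paper quotes, namely \cite[Theorem~1.7]{OY} (Ostafe--Young), which is somewhat more specific than the ``Dvornicich--Zannier circle'' you attribute it to and is not contained in Nguyen's paper \cite{N}; with that reference supplied, your fiber-counting step via non-verticality completes the argument as the paper intends.
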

\begin{proof}
Using the proof of Proposition \ref{thmdynamics} and \cite[Theorem 1.7]{OY}, we obtain the desired conclusion.
\end{proof} We can afterwards conclude the veracity of such fact for general split polynomial maps.
\begin{theorem}
Let $n \geq 2$, and let $f_1,...,f_n \in K[X]$ be polynomials of degrees at least $2$ that are not linear conjugate to monomials or to Chebyshev polynomials. Then Theorem 3.10 holds for the dynamics of the split polynomial map $\Phi=(f_1,...,f_n): (\mathbb{P}_K^1)^n \rightarrow (\mathbb{P}_K^1)^n$ and the subset of $\bigcup_V(C\cap V)$ consisting of images under finitely many split polynomial maps $$(p_1,...,p_n):(\mathbb{P}^1(K_c))^n \rightarrow (\mathbb{P}^1(K_c))^n  $$ over $K_c$. Such set is finite
\end{theorem}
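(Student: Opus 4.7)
The plan is to reduce to the setting of Theorem~3.10 by first observing that if $p_1,\ldots,p_n \in K_c[X]$ and $r_i \in \mathbb{P}^1(K_c)$, then each $p_i(r_i)$ lies in $K_c$, so any point of $\bigcup_V (C\cap V)$ lying in the image $(p_1,\ldots,p_n)((\mathbb{P}^1_{K_c})^n)$ of some $K_c$-rational polynomial split map automatically belongs to $(\mathbb{P}^1(K_c))^n$. Hence the essential content of the statement is to extend Theorem~3.10 from the diagonal split map $(f,\ldots,f)$ to the general split polynomial map $\Phi=(f_1,\ldots,f_n)$, and this is what I propose to prove.

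The first step is to generalise the bounded-height input of Lemma~\ref{lemdynamics} to $\Phi$. Following the approach of \cite{N}, I would invoke \cite[Theorem 2.2]{N} to reduce the intersection with an arbitrary periodic hypersurface $V$ to intersections with periodic curves $V_{ij}\subset \mathbb{P}^1\times\mathbb{P}^1$ for each pair of dynamics $(f_i,f_j)$. The Medvedev--Scanlon classification, applied to this (possibly asymmetric) pair, shows that each such $V_{ij}$ has the form $x_i=c$ with $c$ an $f_i$-periodic point, or $x_j=g(x_i)$ with $g\circ f_i^{\circ k}=f_j^{\circ k}\circ g$ for some $k\geq 1$. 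The height-comparison estimate from the proof of \cite[Theorem 4.3]{N} then transfers factor by factor, giving for $(\alpha,g(\alpha))\in C\cap V_{ij}$ the analogue of its equation~(31),
\[
\hat{h}(\alpha)\,\deg g \ll \sqrt{\hat{h}(\alpha)\,\deg g},
\]
where $\hat h$ is the canonical height attached to $f_i$ and is commensurate with $h$.

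Next I would adapt the Bogomolov step from the proof of Proposition~\ref{thmdynamics}: the above inequality yields $\sqrt{h(\alpha)\deg g}\ll 1$, and the Bogomolov property of $K_c$ (via Amoroso--Zannier) forces $\deg g\ll 1$. A Ritt-type argument (the natural analogue of \cite[Proposition 2.3(d)]{N} for possibly distinct $f_i,f_j$) then limits the polynomials $g$ of bounded degree satisfying $g\circ f_i^{\circ k}=f_j^{\circ k}\circ g$ to a finite set, so each intersection $C\cap \{x_j=g(x_i)\}$ is finite. The remaining case $x_i=c$ with $c\in K_c$ an $f_i$-periodic point is handled by \cite[Theorem 1.7]{OY}, which applies to each $f_i$ individually under our hypotheses and contributes only finitely many $K_c$-rational periodic values.

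The main obstacle I expect lies in verifying carefully that the Medvedev--Scanlon classification of invariant curves and the Ritt-style bound on the number of low-degree intertwiners both carry over cleanly from the symmetric setting $f_i=f_j$ treated in \cite{N} to the present asymmetric one; once this bookkeeping is in place, every other ingredient imports essentially verbatim from the proofs of Proposition~\ref{thmdynamics} and Theorem~3.10.
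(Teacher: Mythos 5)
Your proposal diverges from the paper's proof in a way that exposes two linked problems, one in reading the statement and one in the mathematics.

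First, the restriction in the statement to points ``lying in the image $(p_1,\ldots,p_n)(\mathbb{P}_{K_c}^1)^n$'' is not vacuous, as you assume at the outset. The $(p_1,\ldots,p_n)$ are not an arbitrary split polynomial map: they are the specific semiconjugacy polynomials produced in the proof of \cite[Theorem 4.16]{N}, i.e.\ polynomials with $f_i\circ p_i = p_i\circ q$ for a common $q$ (itself not linearly conjugate to a monomial or a Chebyshev polynomial), with the key property that every $\Phi$-periodic curve $V$ pulls back under $(p_1,\ldots,p_n)$ to a $(q,\ldots,q)$-preperiodic one. Taking $p_i=X$ for all $i$ to make the image all of $(\mathbb{P}^1)^n$ is not available unless the $f_i$ all coincide. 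The paper's proof then reduces to the diagonal map $(q,\ldots,q)$, invokes Theorem~3.10 for $q$, and transports the conclusion back through the $p_i$ using the height functoriality of \cite[Theorem 3.11]{Silv}. So the restriction is precisely what makes the argument go through; the theorem is not claiming the unrestricted finiteness you propose to prove.

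Second, and this is why the unrestricted claim is genuinely out of reach by your method, the Medvedev--Scanlon step does not ``carry over cleanly'' to the asymmetric case. For a split map $(f_i,f_j)$ with $f_i\ne f_j$, the non-vertical, non-horizontal periodic curves in $\mathbb{P}^1\times\mathbb{P}^1$ need not be graphs $x_j=g(x_i)$; the classification produces curves of the common-quotient form $\{\pi(x_i)=\rho(x_j)\}$ with $\pi\circ f_i^{\circ k}=h\circ\pi$ and $\rho\circ f_j^{\circ k}=h\circ\rho$ for a common $h$. Your factor-by-factor transfer of equation~(31) of \cite{N} and the subsequent Ritt-type degree bound both presuppose the graph form, so they do not apply to these curves. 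This is exactly the obstruction that the paper circumvents by passing to the image of $(p_1,\ldots,p_n)$ and working with the diagonal dynamics of $q$: there the periodic curves really are graphs of commuting maps, and Theorem~3.10 applies. You do flag this verification as the ``main obstacle,'' but it is not a bookkeeping issue; it is the reason the theorem is stated with the image restriction, and your proposed direct route does not close the gap.
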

\begin{proof}
By the proof of \cite[Theorem 4.16]{N}, it is enough to prove the result for $n=2$. By the same proof we reduce to the case when there are polynomials $p_1$ and $p_2$ such that $f_1 \circ p_1= p_1 \circ q$ and $f_2 \circ p_2=p_2 \circ q$, and such that for every curve $V$  that is $ \Phi$-periodic,  $(p_1,p_2)^{-1}(V)$ is $(q,q)$-preperiodic. Moreover, $q$ is also not linear conjugate to monomials or Chebyshev polynomials. Using all this, the properties of heights under the image of polynomials given by \cite[Theorem 3.11]{Silv}, and the validity of the statement for the split map $(q,q)$, we have the desired.
\end{proof}
\begin{remark}
It would be interesting to try to obtain higher dimensional generalisations of Theorem 3.11, using for example the results of \cite{GN}.
\end{remark}

\end{document}